\DeclareMathOperator*{\dist}{dist}
\DeclareMathOperator*{\inflow}{inflow}
\DeclareMathOperator*{\outflow}{outflow}
\newcommand{\Zz}{\mathbb{Z}}
\newtheorem{theorem}{Theorem}
\newtheorem{lemma}[theorem]{Lemma}
\newtheorem{corollary}[theorem]{Corollary}
\newtheorem{proposition}[theorem]{Proposition}
\newtheorem{definition}[theorem]{Definition}
\begin{document}

\title{Flow-firing processes}
\author{Pedro Felzenszwalb}
\address[Pedro Felzenszwalb]{Brown University}
\author{Caroline Klivans}
\address[Caroline Klivans]{Brown University}

\keywords{chip-firing, confluence, conservative flows}

\begin{abstract}

We consider a discrete non-deterministic \emph{flow-firing} process
for rerouting flow on the edges of a planar complex.  The process is
an instance of higher-dimensional chip-firing.  In the flow-firing
process, flow on the edges of a complex is repeatedly diverted across
the faces of the complex.  For non-conservative initial configurations
we show this process never terminates.  For conservative initial flows
we show the process terminates after a finite number of rerouting
steps, but there are many possible final configurations reachable from
a single initial state.  Finally, for conservative initial flows
around a topological hole we show the process terminates at a unique
final configuration.  In this case the process exhibits global
confluence despite not satisfying local confluence.
  
\end{abstract}

\maketitle

\section{Introduction}

We consider a discrete process for rerouting flow on the \emph{edges}
of a planar complex.  The process is a form of discrete diffusion; a
flow is repeatedly diverted according to a discrete Laplacian.  It is
also an instance of higher-dimensional chip-firing.  In the
flow-firing process considered here, flow is placed on the
$1$-dimensional cells of a complex and is rerouted across the
$2$-dimensional cells.  This is compared to graphical chip-firing,
where chips are placed on the vertices ($0$-dimensional cells) of a
graph and redistributed across the edges ($1$-dimensional cells).
Previous work on higher-dimensional chip-firing has considered
algebraic structures defined for finite complexes.  Here we consider
the dynamics of higher-dimensional chip-firing and work with infinite
complexes.

We focus on two important features of the flow-firing process --
whether or not the system is terminating and whether or not the system
is confluent.  To this end, three settings are explored.  We show
that:
\begin{itemize}
\item For non-conservative initial configurations, the process
  does not terminate (Section~\ref{sec:single}).
\item For conservative initial configurations, the
  process always terminates but does not have a unique terminating
  state.  The final configuration depends on the choices made during
  the firing process (Section~\ref{sec:terminate}).
\item For conservative initial configurations around a
  distinguished face (a topological hole), the process terminates in a
  unique state.  The final configuration is always the same regardless
  of the choices made during the firing process (Section~\ref{sec:confluent}).
\end{itemize}

See Figure~\ref{fig:summary} for an illustration of the three
different settings.

The third case is of particular interest.  The uniqueness of the final
configuration is an example of global confluence that does not follow
from local confluence, thus adding to an active narrative in
chip-firing, see Section~\ref{sec:flows}.

The special case of the $2$-dimensional grid is treated throughout
most of the paper for simplicity.  Section~\ref{sec:extensions}
discusses extensions to more general cases including arbitrary
planar graphs and higher dimensional polytopal decompositions.

\begin{figure}
\begin{tabular}{c}
  \parbox[c]{1.6in}{\includegraphics[height=1.3in]{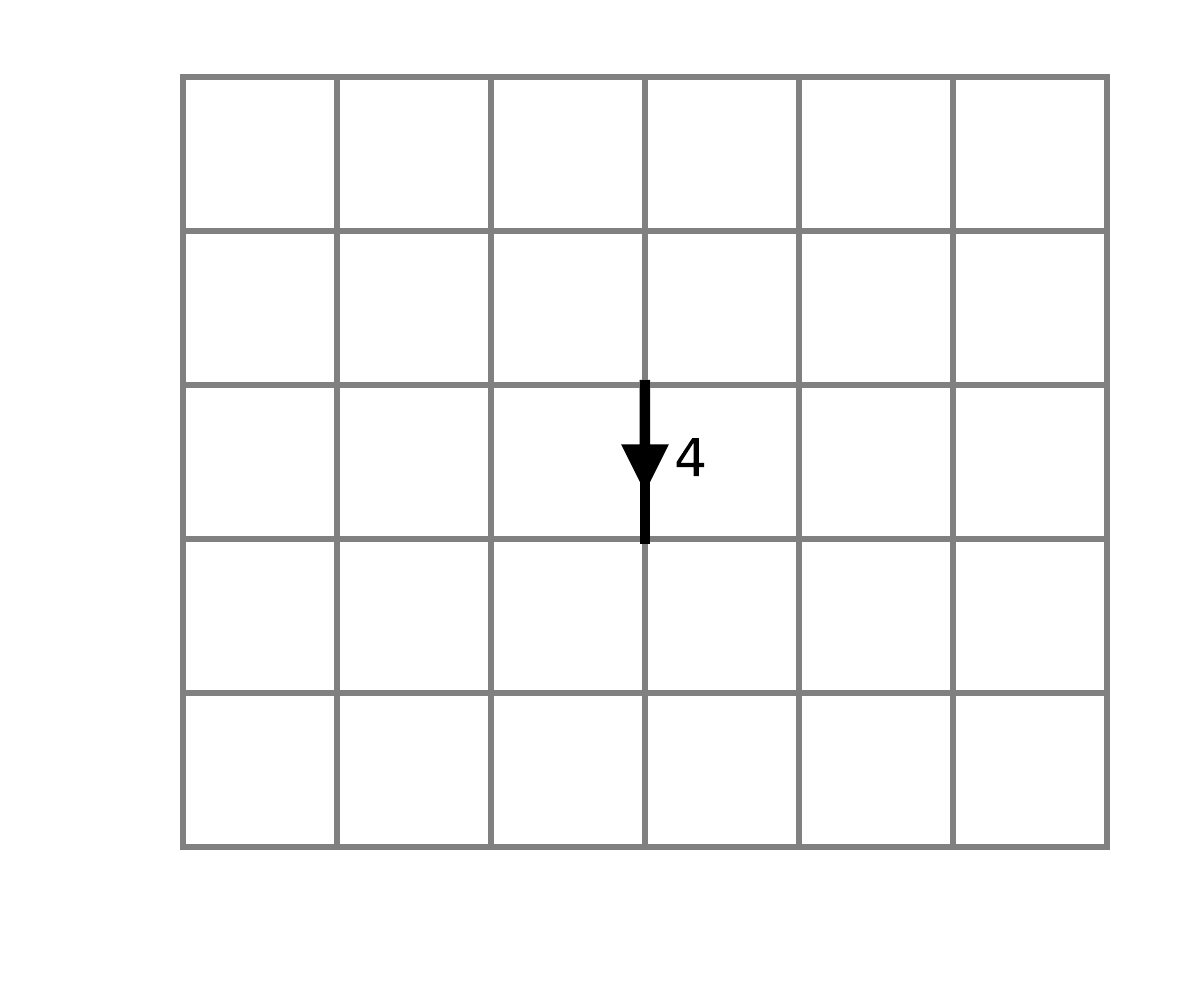}}
  {\Large $\rightarrow$}
  \parbox[c]{1.6in}{\includegraphics[clip=true,trim=20 15 0 0, height=1.3in]{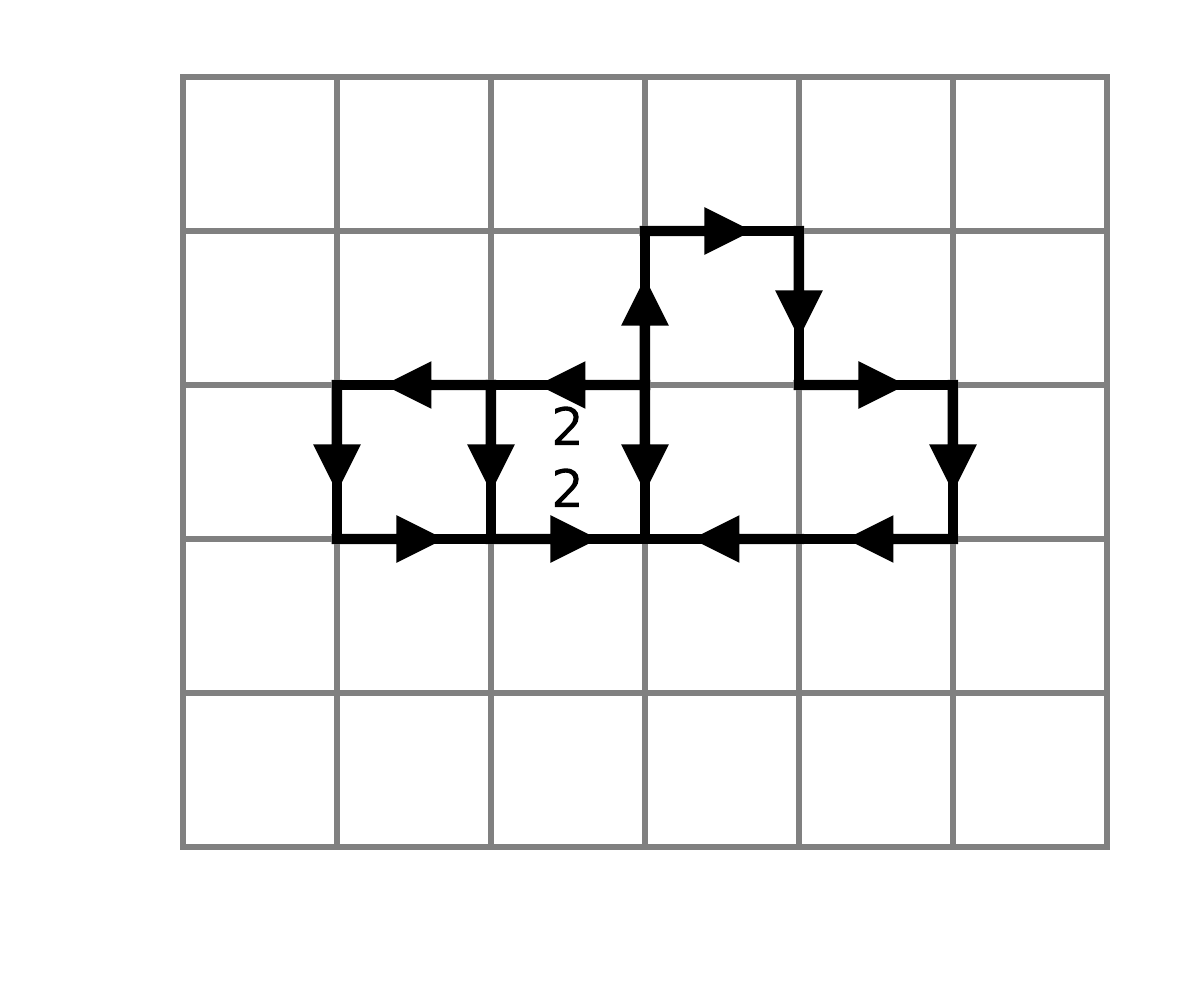}}
  {\Large $\rightarrow$}
  \parbox[c]{1.6in}{\includegraphics[clip=true,trim=20 15 0 0, height=1.3in]{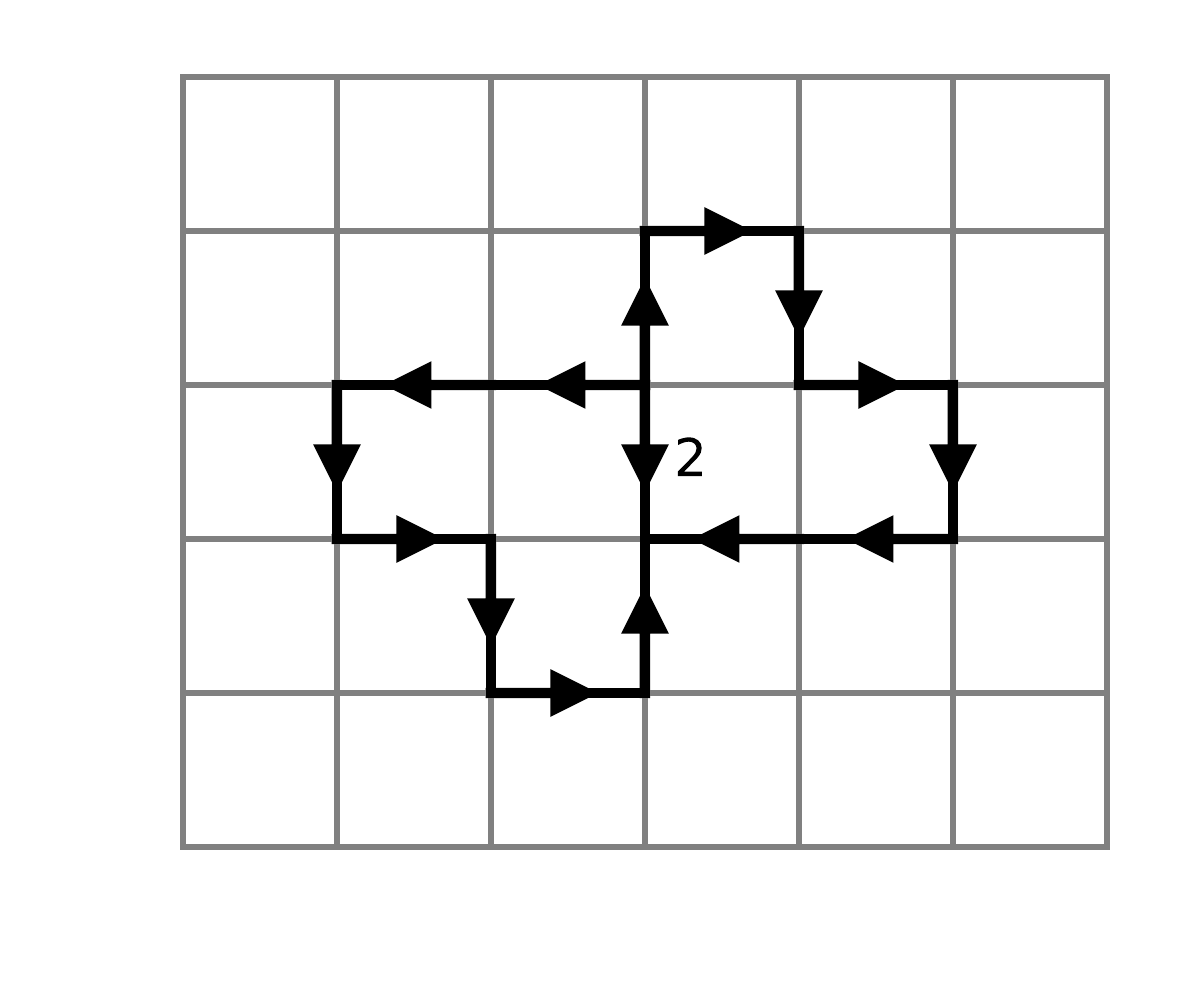}} {\Huge $\cdots$} \\
(a) Non-conservative flow: non-terminating process.
\end{tabular}

\vspace{.2cm}

\begin{tabular}{c}
  \parbox[c]{1.4in}{\includegraphics[clip=true,trim=15 15 0 0, height=1.3in]{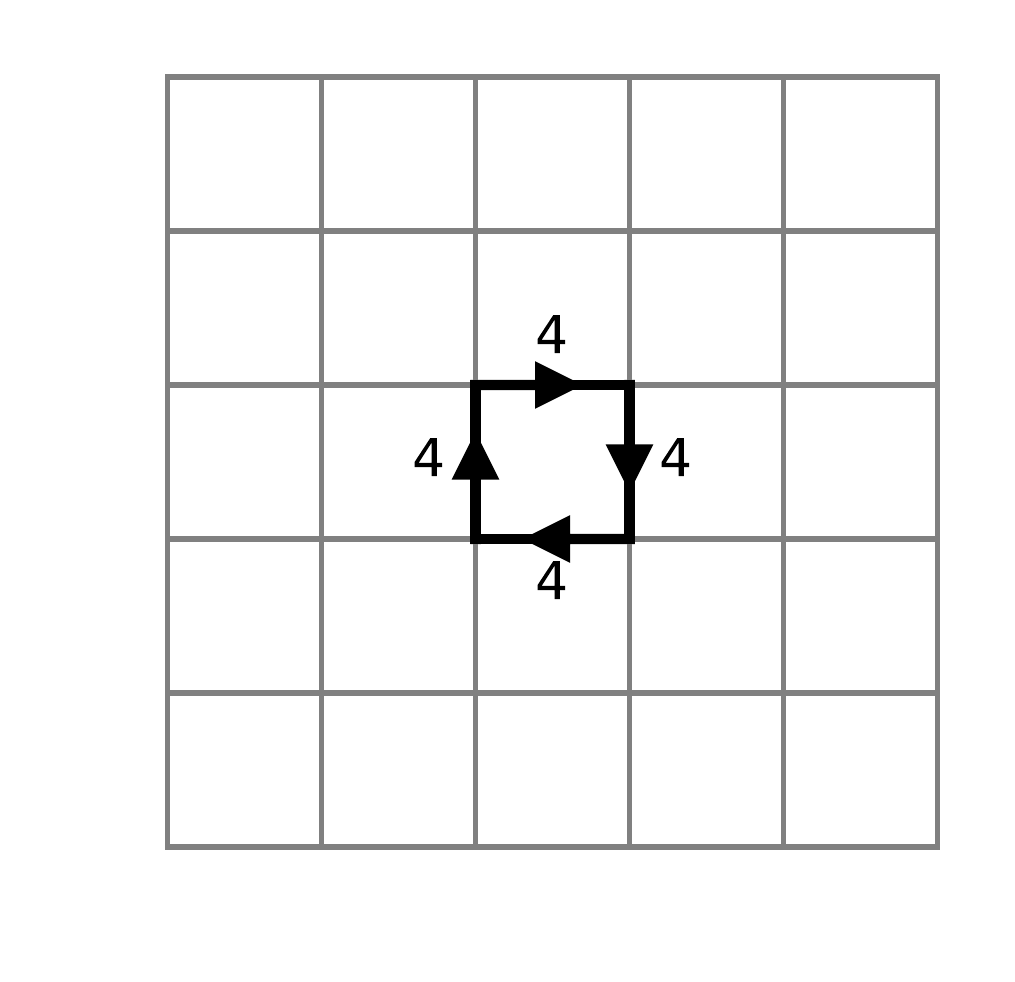}}
  {\Large \parbox[c]{2in}{\hbox{$\nearrow$} \hbox{$\searrow$}}}
  \parbox[c]{1.4in}{
    \hbox{\includegraphics[clip=true,trim=15 15 0 0, height=1.3in]{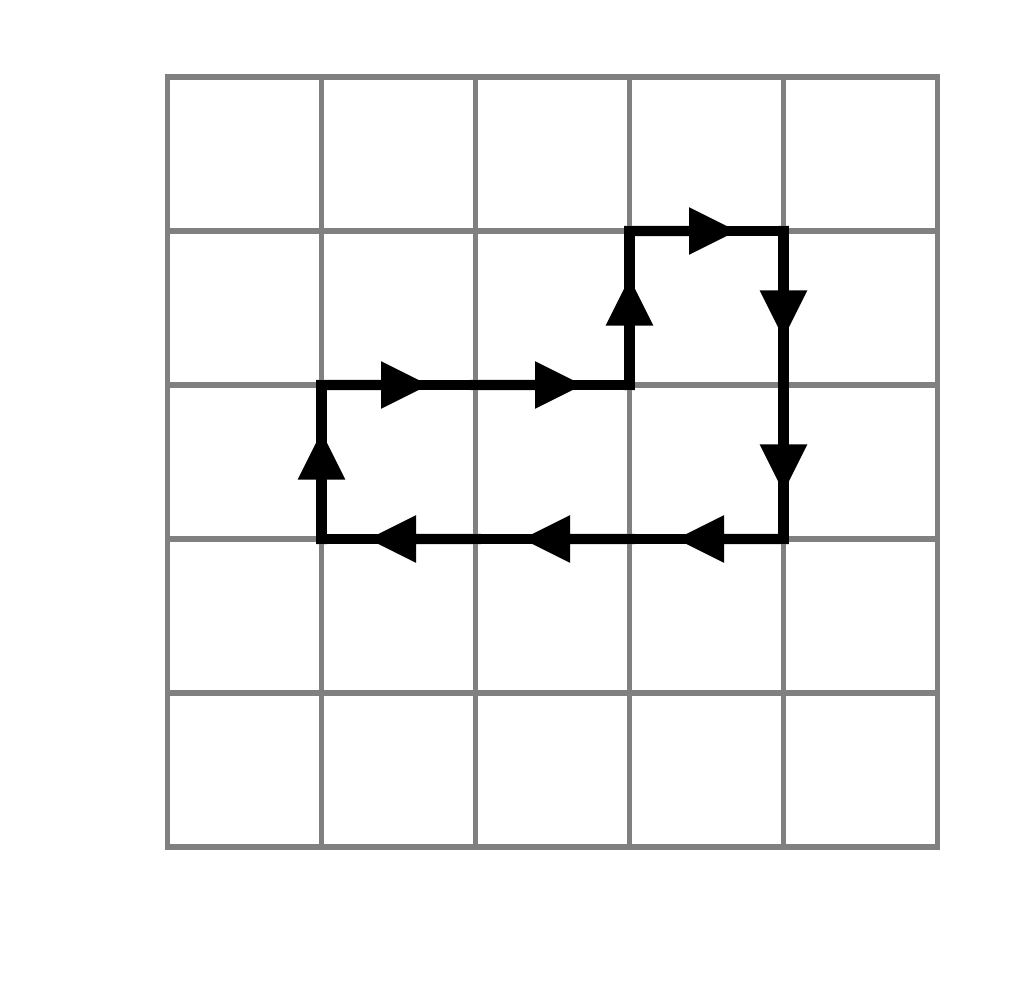}}
    \vspace{-.5cm}
    \hbox{\hspace{2cm}{\Huge $\vdots$}}
    \hbox{\includegraphics[clip=true,trim=15 15 0 0, height=1.3in]{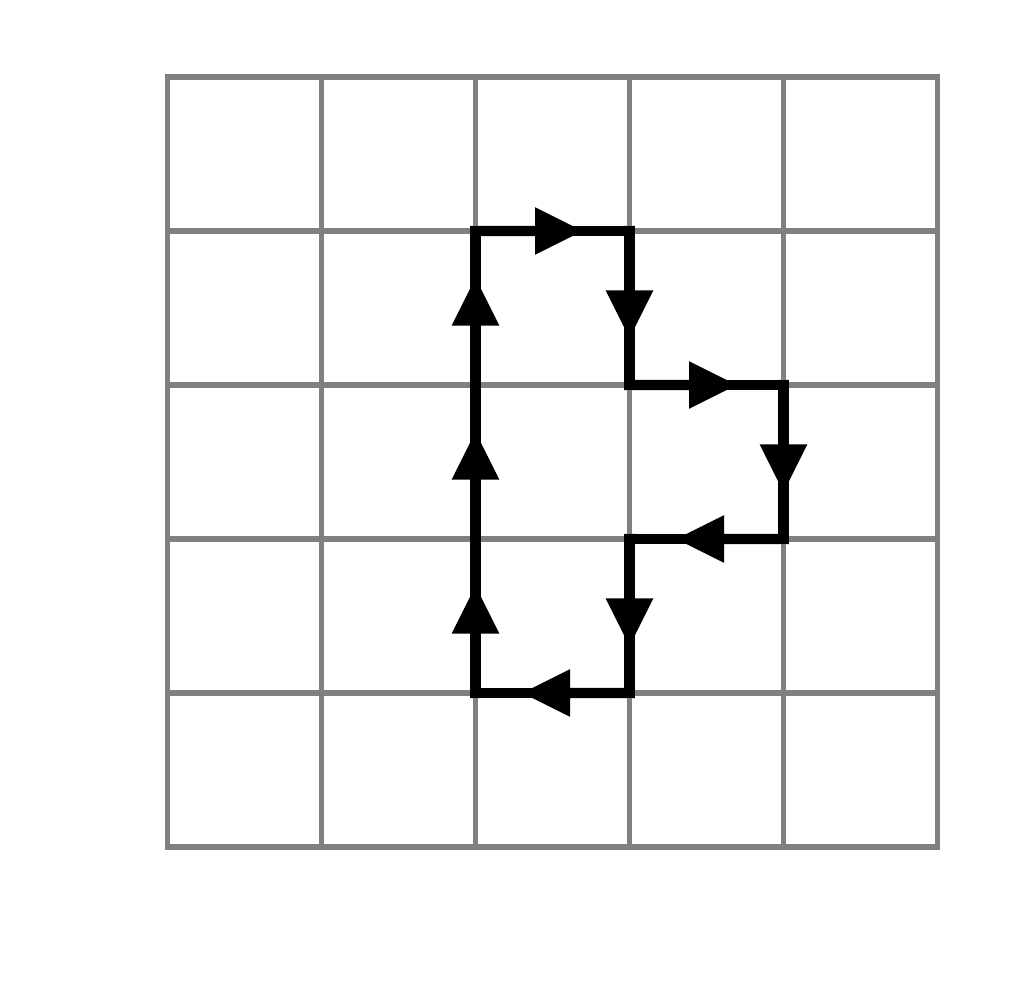}}} \\
  (b) Circulation: terminating but non-unique final configuration.
\end{tabular} 

\vspace{.2cm}

\begin{tabular}{c}
  \parbox[c]{2.9in}{\includegraphics[clip=true,trim=30 30 0 0, height=2.85in]{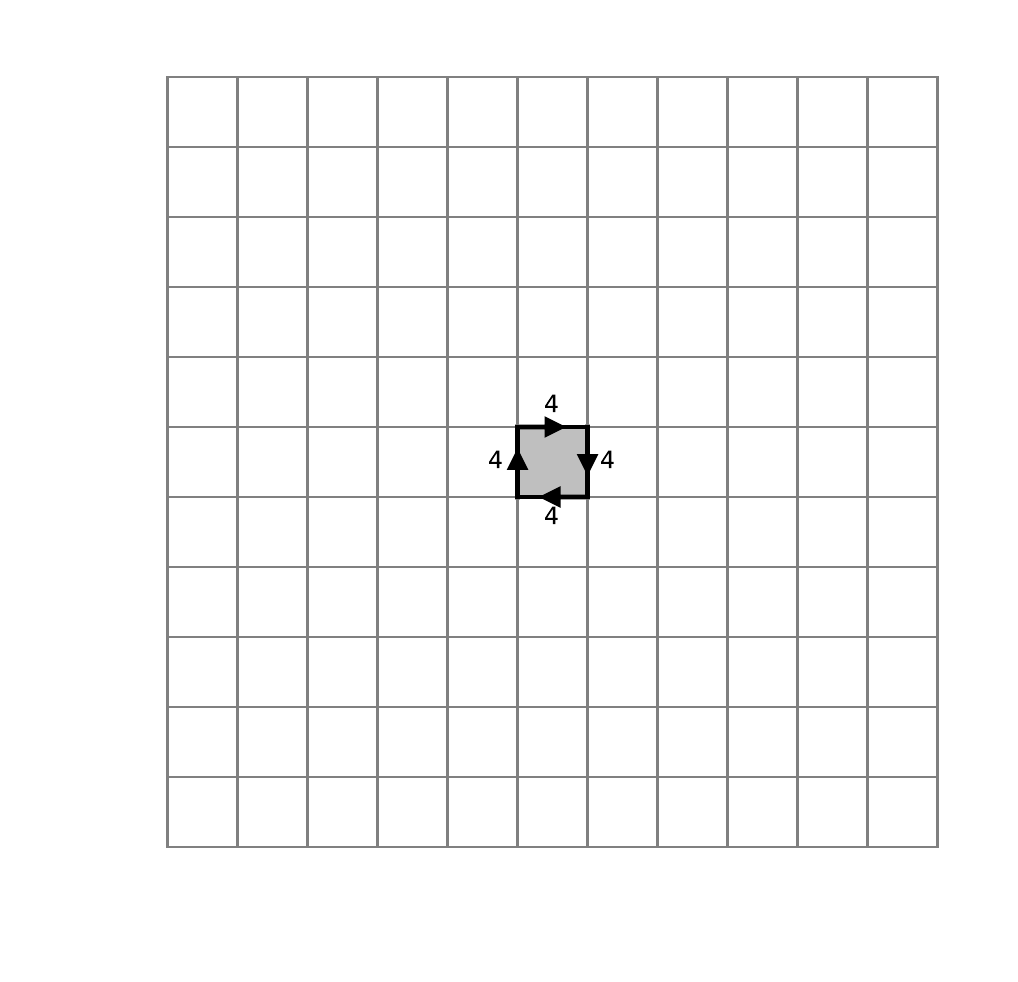}}
  {\Large $\rightarrow$}
  \parbox[c]{2.9in}{\includegraphics[clip=true,trim=30 30 0 0, height=2.85in]{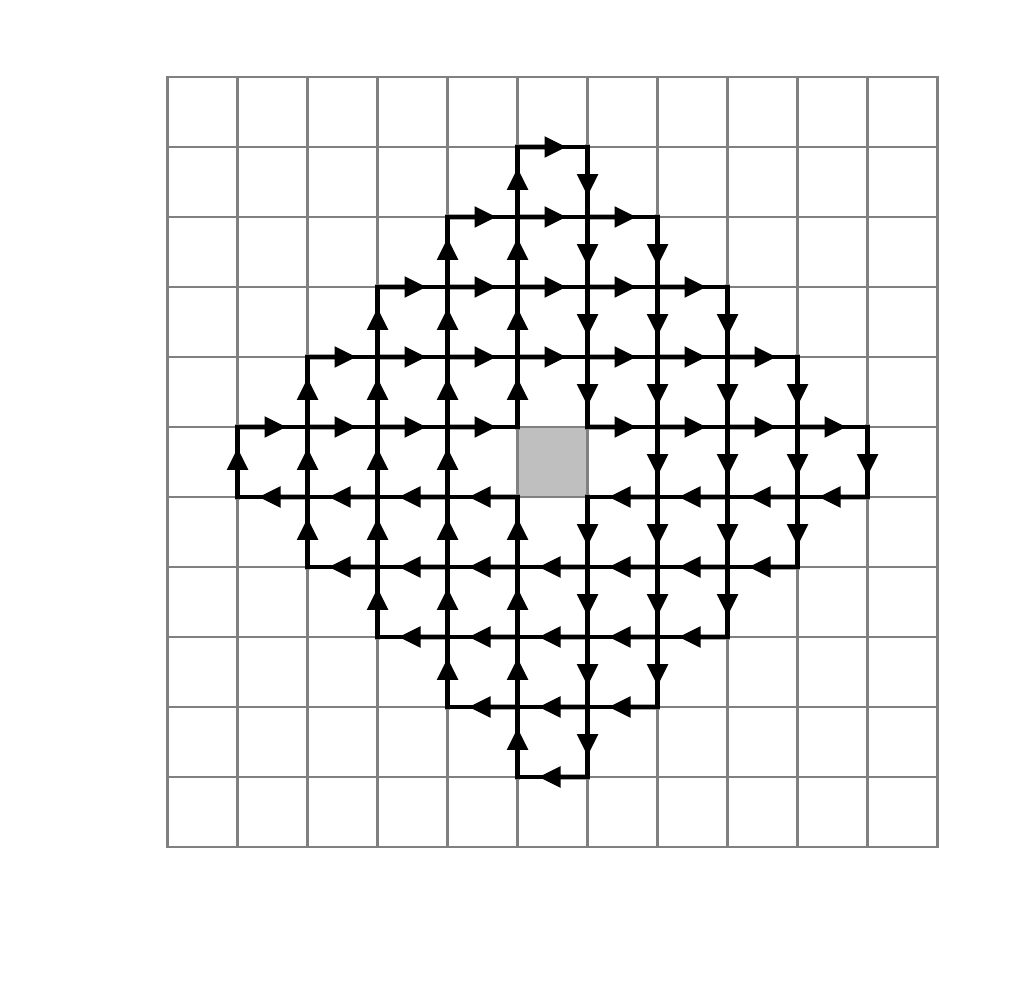}} \\
  (c) Circulation around a hole: terminating and unique final configuration
\end{tabular}
\caption{Flow-firing in three different settings.}
\label{fig:summary}
\end{figure}

\section{Flow-firing}
\label{sec:flows}

Let $G$ be the (infinite) grid graph embedded as $\mathbb{Z}^2$.  For
bookkeeping purposes, we orient each edge from South to North and West
to East.  The flow-firing process on $G$ involves configurations of
integral flow on the edges of the graph.

\begin{definition} A \emph{flow configuration} for $G$ is an integer assignment 
  $f$ specifying an amount of flow on each edge.  Negative values
  signify that the flow is oriented opposite that of the edge itself.
\end{definition}

Figure~\ref{fig:config} illustrates a flow configuration and the
corresponding integer vector.\footnote{In our terminology a flow may
  or may not be conservative at each vertex.  Other sources reserve
  the name ``flow'' for the more restricted case.}

Let $e$ be an edge and $\sigma$ a face (square) that contains $e$.
\emph{Rerouting} a unit of flow on $e$ across $\sigma$ replaces one
unit of flow along $e$ with one unit of flow along the alternate path
formed by the other edges of $\sigma$, see Figure~\ref{fig:reroute}.

\begin{figure}
  \centerline{\parbox[c]{3.5cm}{\includegraphics[width=3cm]{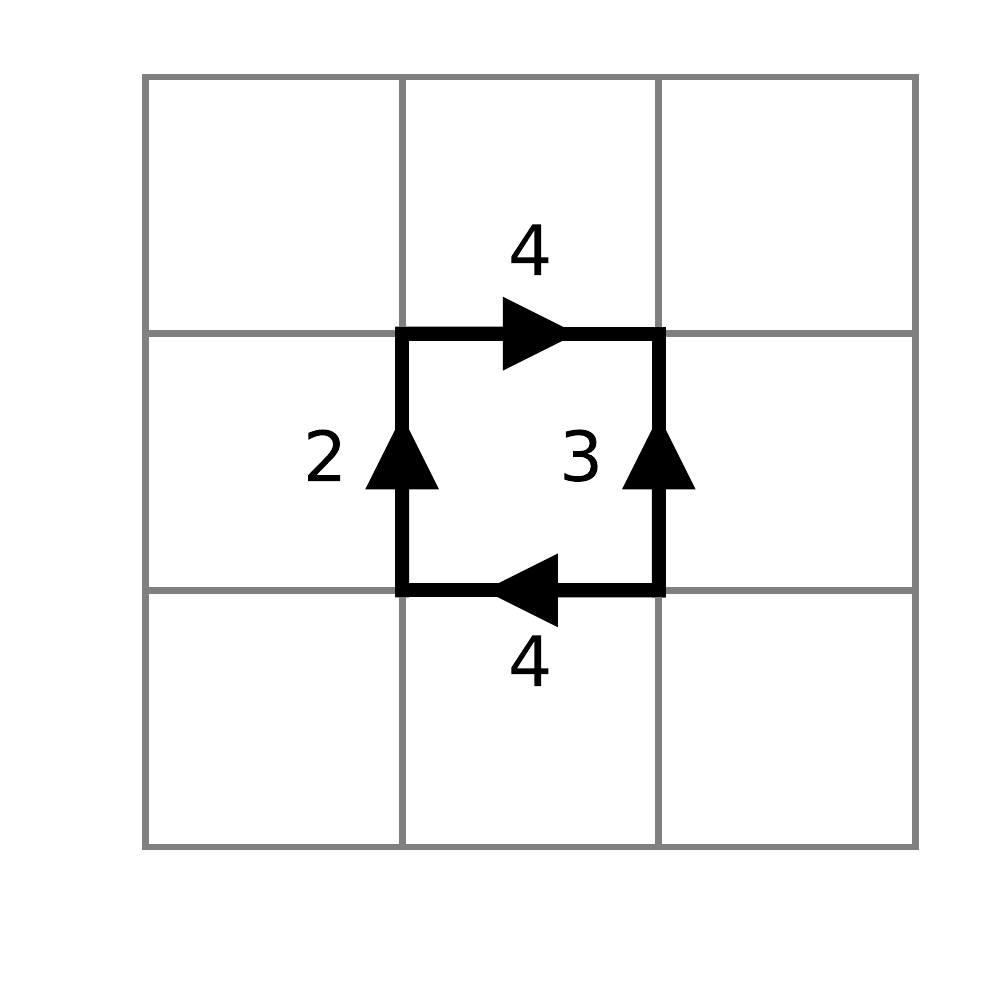}}
    $f=(\ldots,2,3,\ldots,-4,4,\ldots)$ }
  \caption{A flow configuration.}
  \label{fig:config}
\end{figure}

\begin{figure}
  \centerline{\includegraphics[width=1.2in]{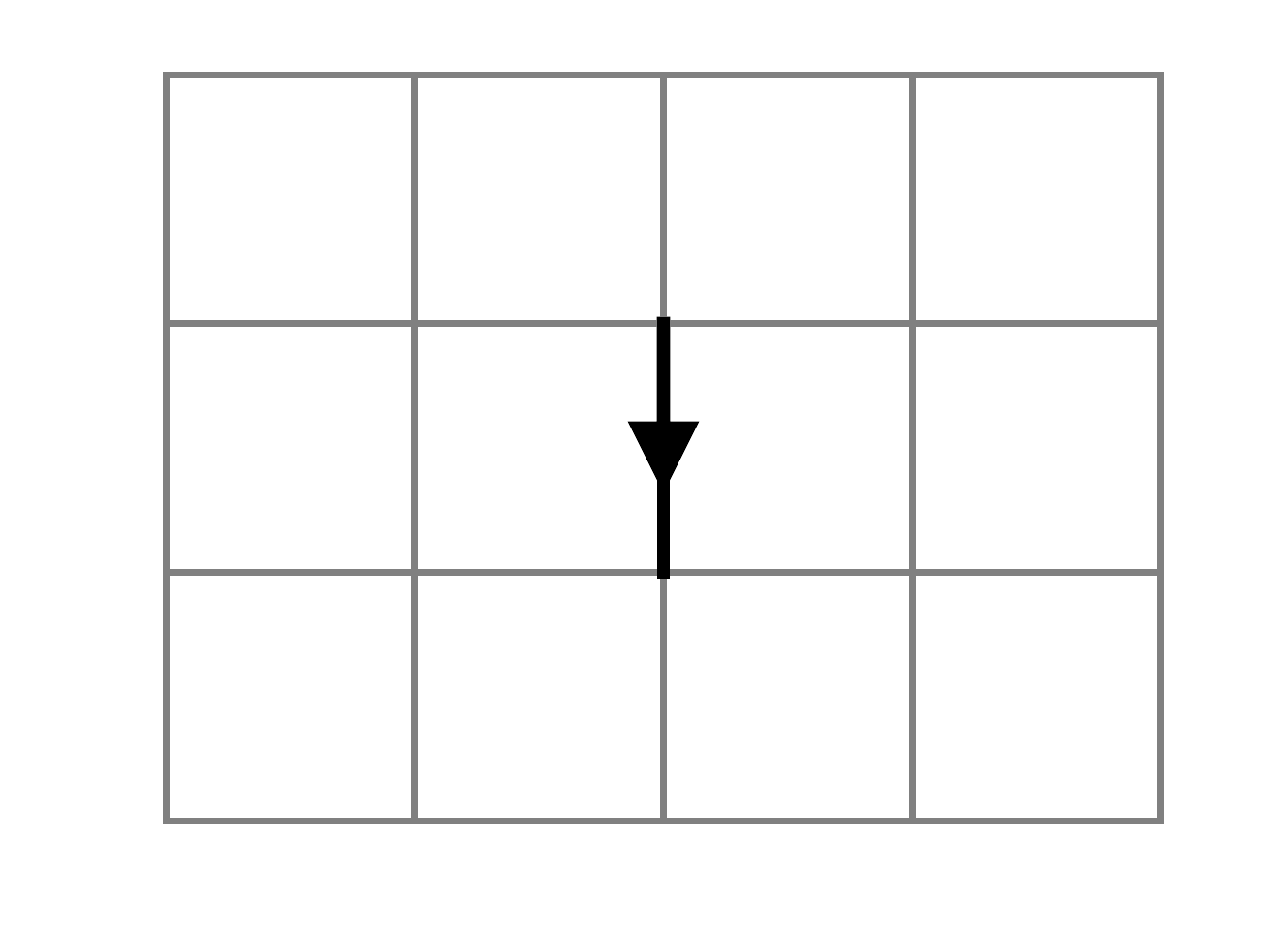}}
  \centerline{\parbox[c]{1.25in}{\includegraphics[width=1.2in]{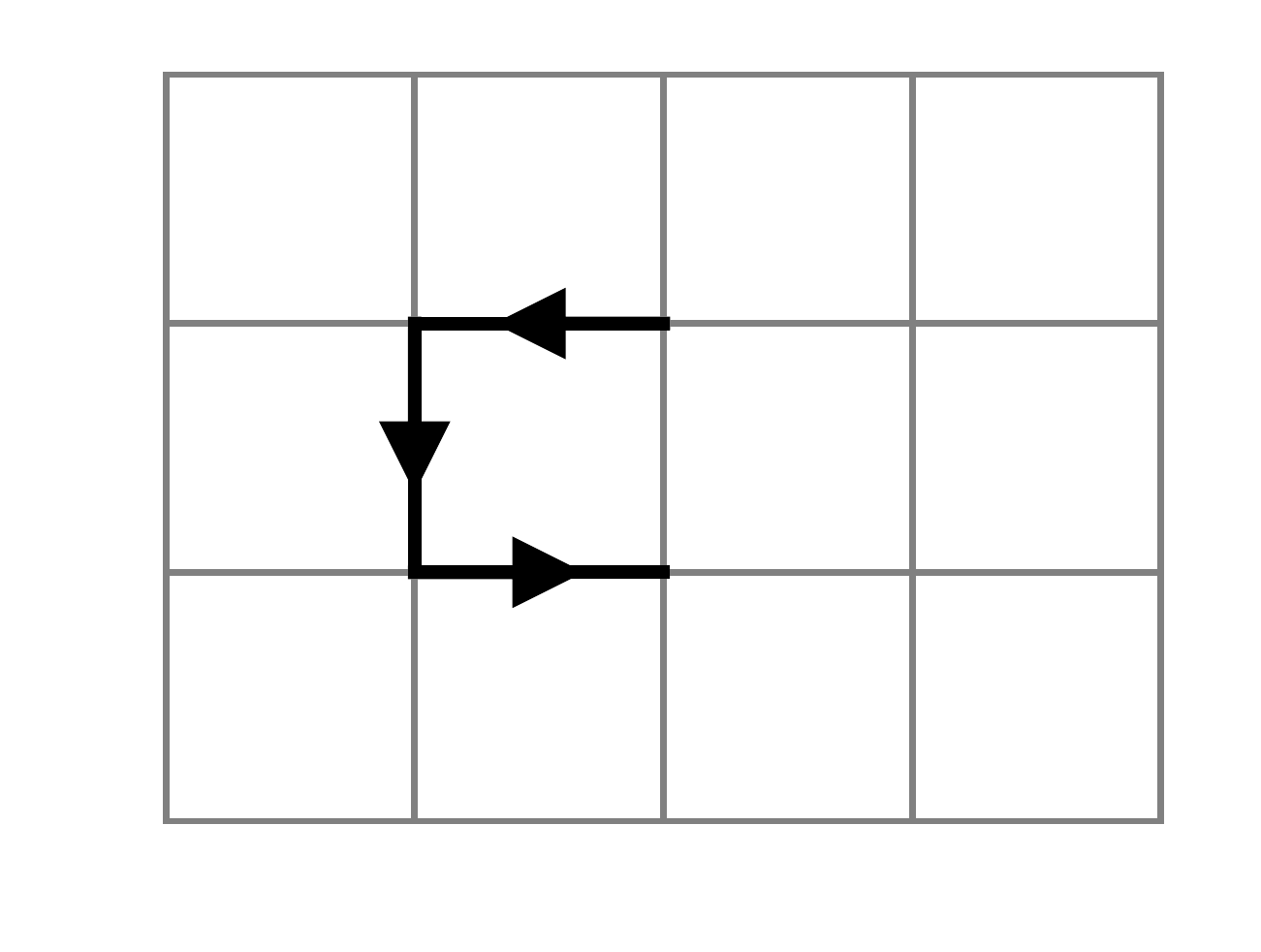}} or 
    \parbox[c]{1.25in}{\includegraphics[width=1.2in]{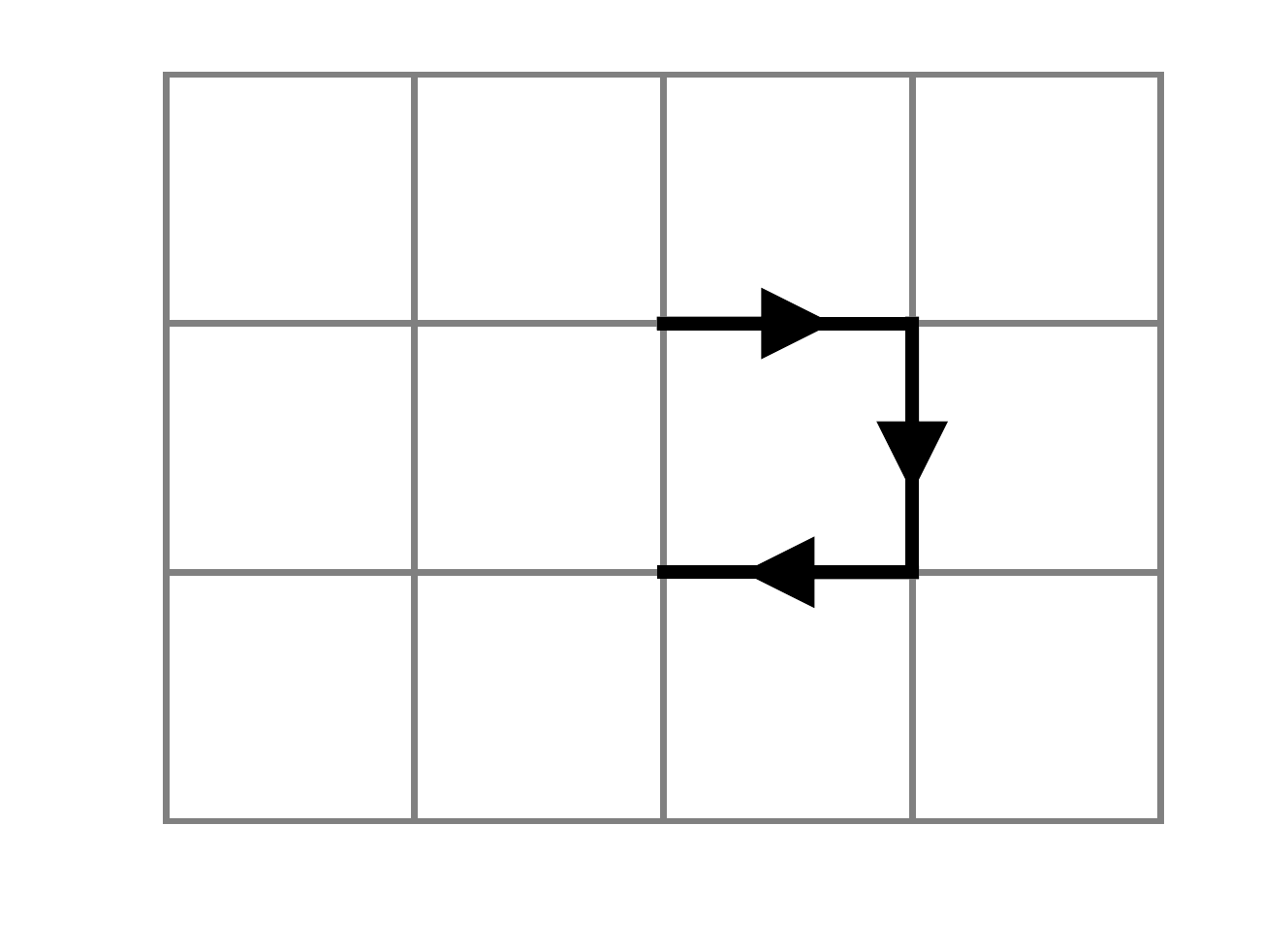}}}
  \caption{Rerouting a unit of flow.  A unit of flow along an edge, as
    in the top, can reroute across a face to the left or to the right,
    resulting in one of the two configurations on the bottom.}
  \label{fig:reroute}
\end{figure}

If an edge has two units of flow (in either direction) we can reroute
one unit around each of the two faces containing $e$.  We are now
ready to define the flow-firing process.\\

\fbox{
  \parbox{.9\textwidth}{
    $\,$ \\
    \noindent{\bf The flow-firing process}\\
    For the grid graph\\
    $\,$ \\
    At each step:  
    \begin{itemize}  
    \item Choose an edge $e$ with $2$ or more units of flow (in either
      direction).
    \item Fire $e$ by rerouting 1 unit of flow around each of the
      two faces containing $e$. 
    \end{itemize}
    \vspace{.2cm}
}} \\

Figure~\ref{fig:process} shows the flow-firing process on an initial
configuration consisting of $2$ units of flow on a single edge.
Figure~\ref{fig:firing} shows an example of the flow-firing process
from a larger initial configuration.  

\begin{figure}
  \centering
  \parbox[c]{1.3in}{\includegraphics[width=1.2in]{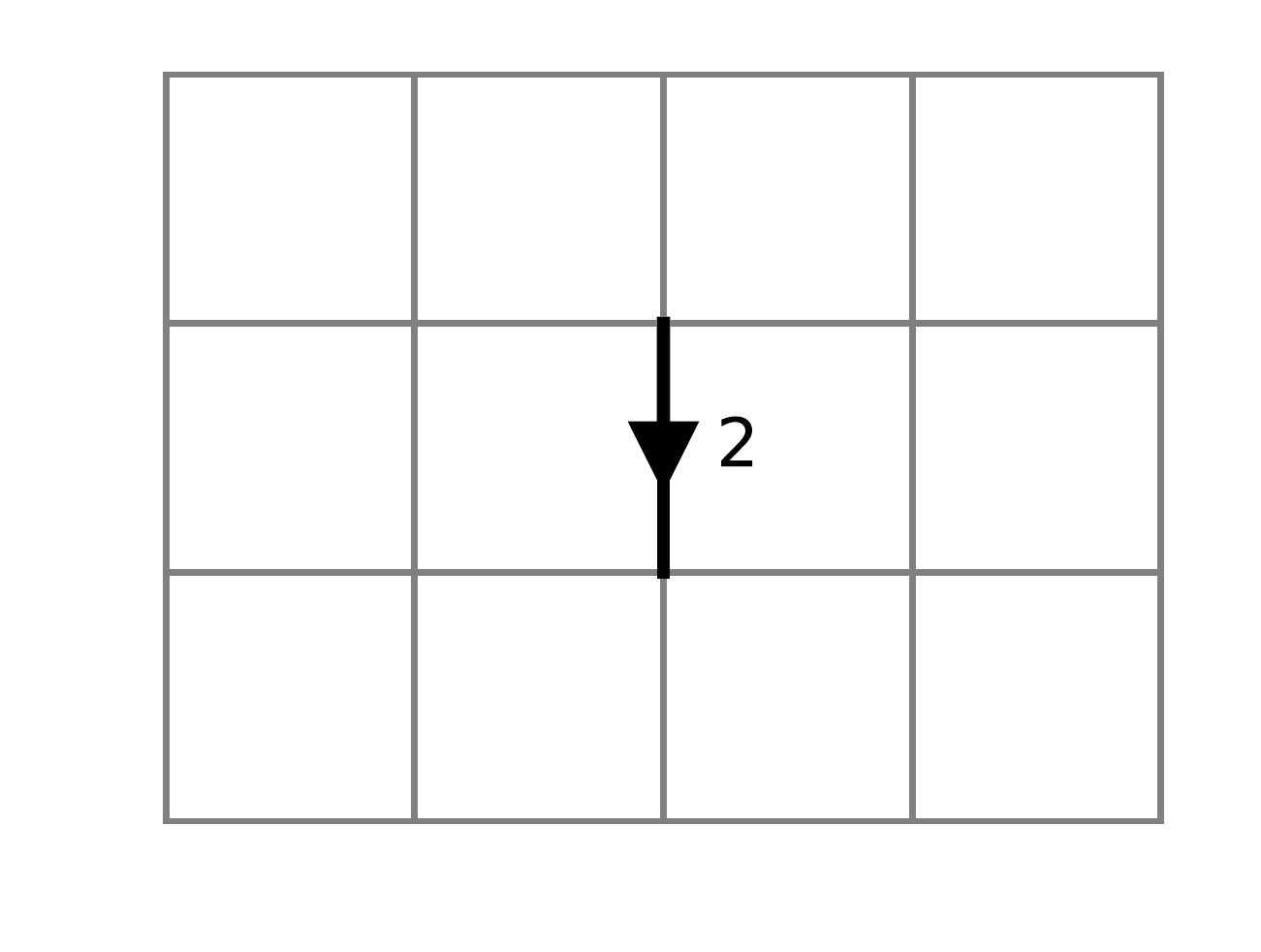}}
  \Large $\rightarrow$
  \parbox[c]{1.3in}{\includegraphics[width=1.2in]{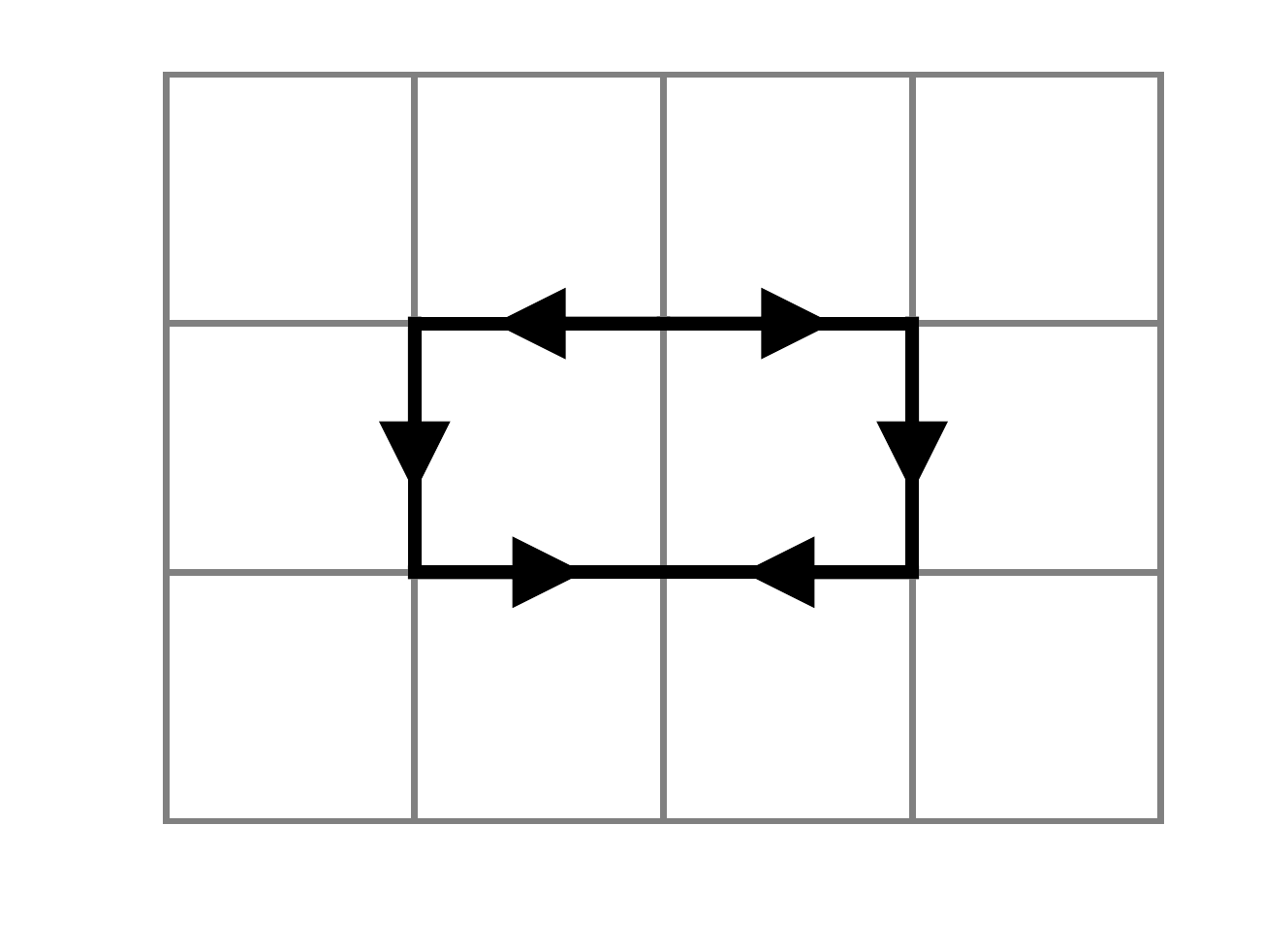}}
  
  \parbox[c]{1.3in}{\includegraphics[width=1.2in]{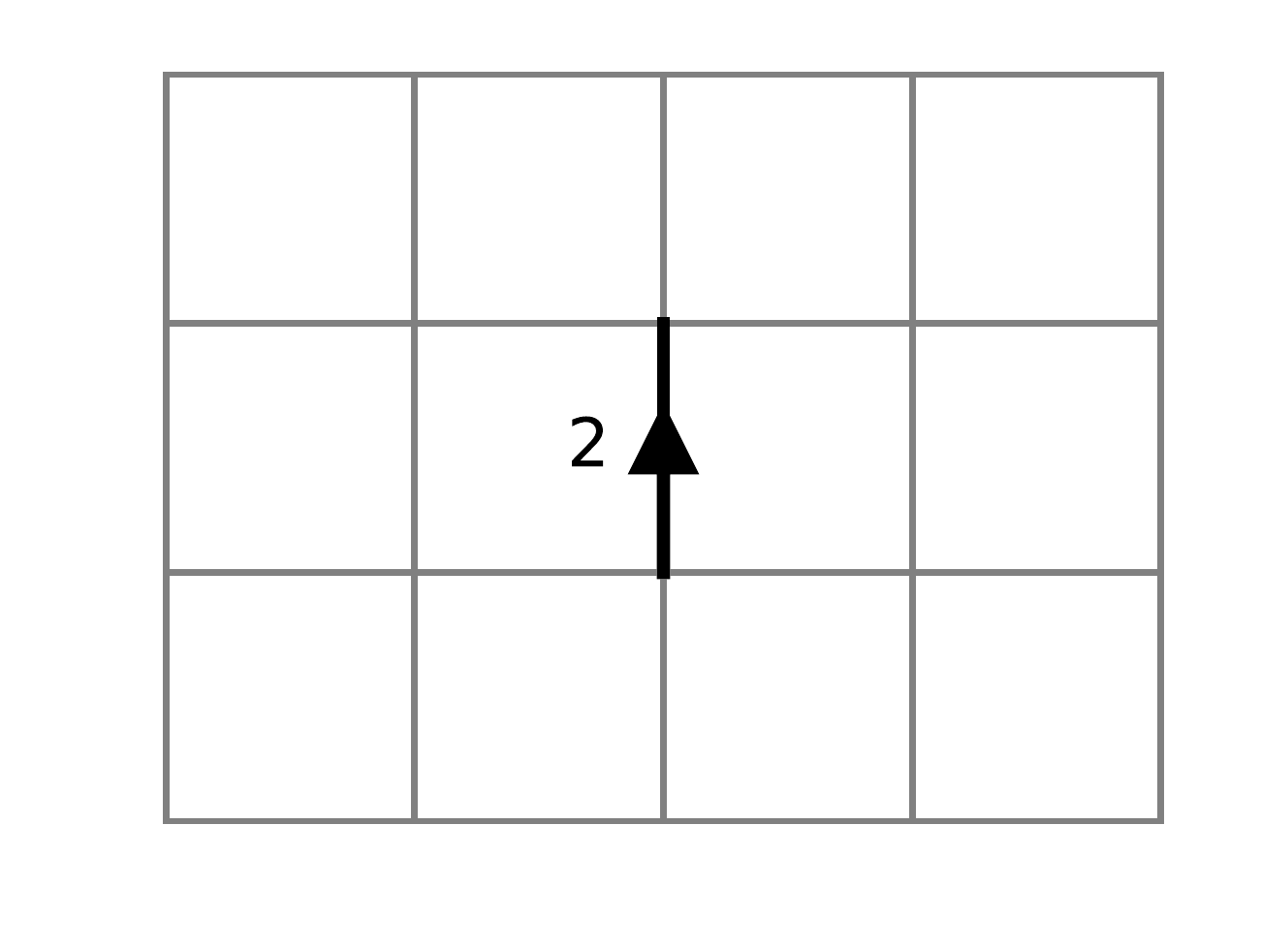}}
  \Large $\rightarrow$
  \parbox[c]{1.3in}{\includegraphics[width=1.2in]{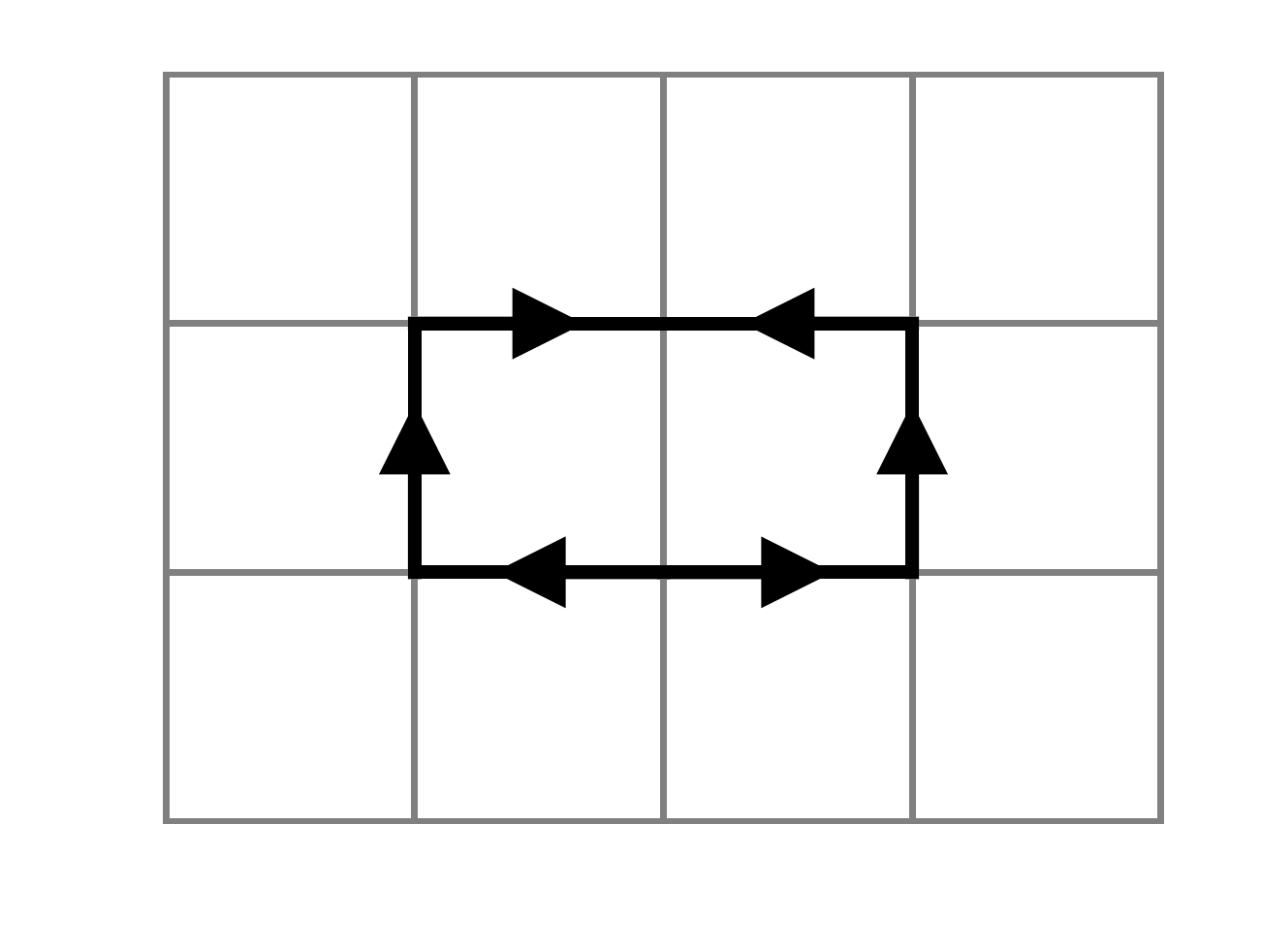}}
  \caption{The flow-firing process.  An edge can fire when it has at
    least two units of flow in either direction.}
  \label{fig:process}
\end{figure}

\begin{figure}
 \centering
  \parbox[c]{1.3in}{\includegraphics[width=1.2in]{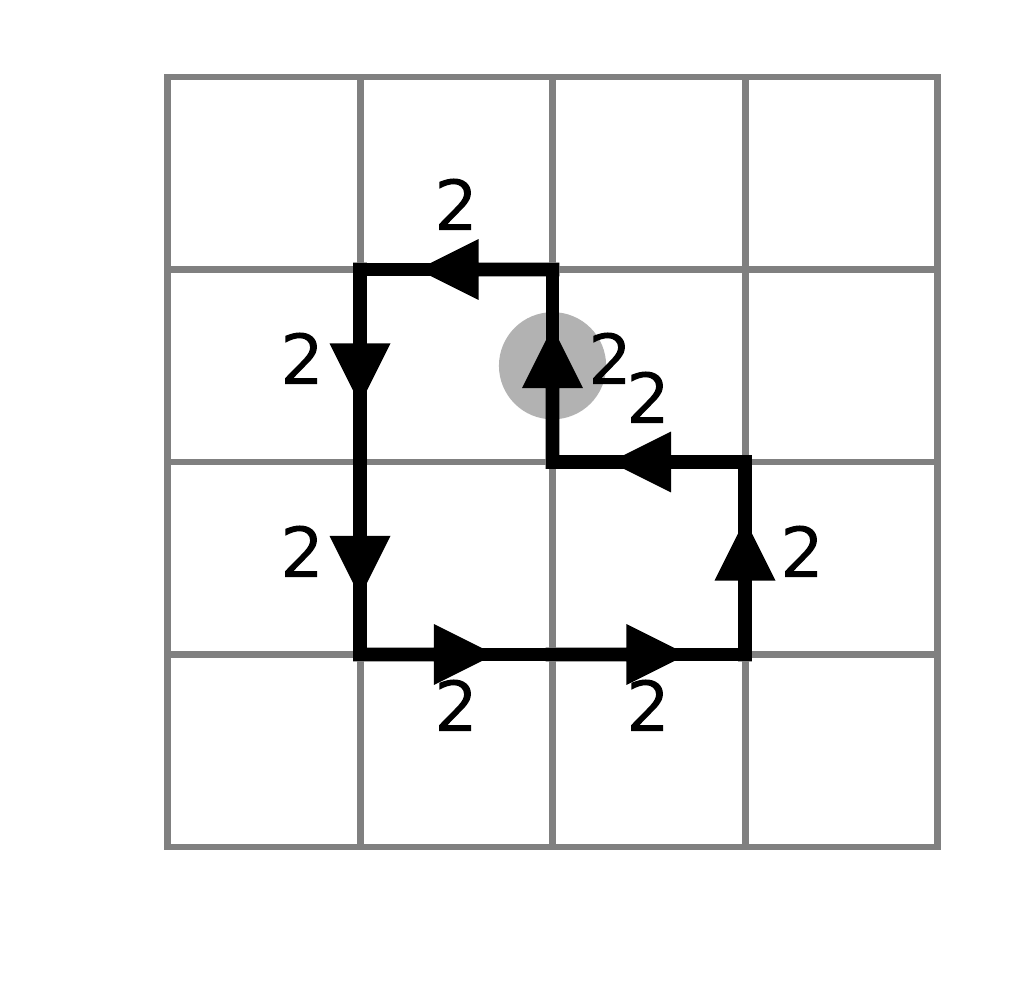}}
  \Large $\rightarrow$
  \parbox[c]{1.3in}{\includegraphics[width=1.2in]{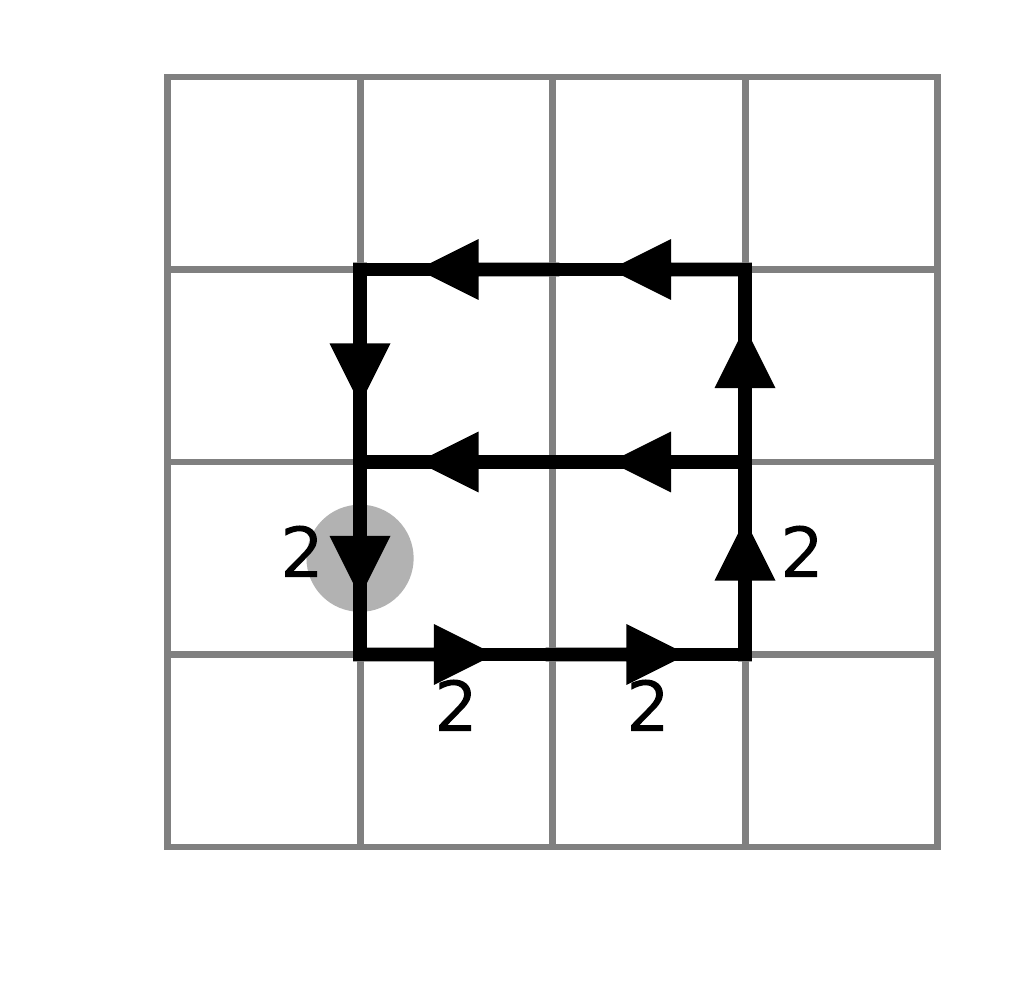}}
  \Large $\rightarrow$
  \parbox[c]{1.3in}{\includegraphics[width=1.2in]{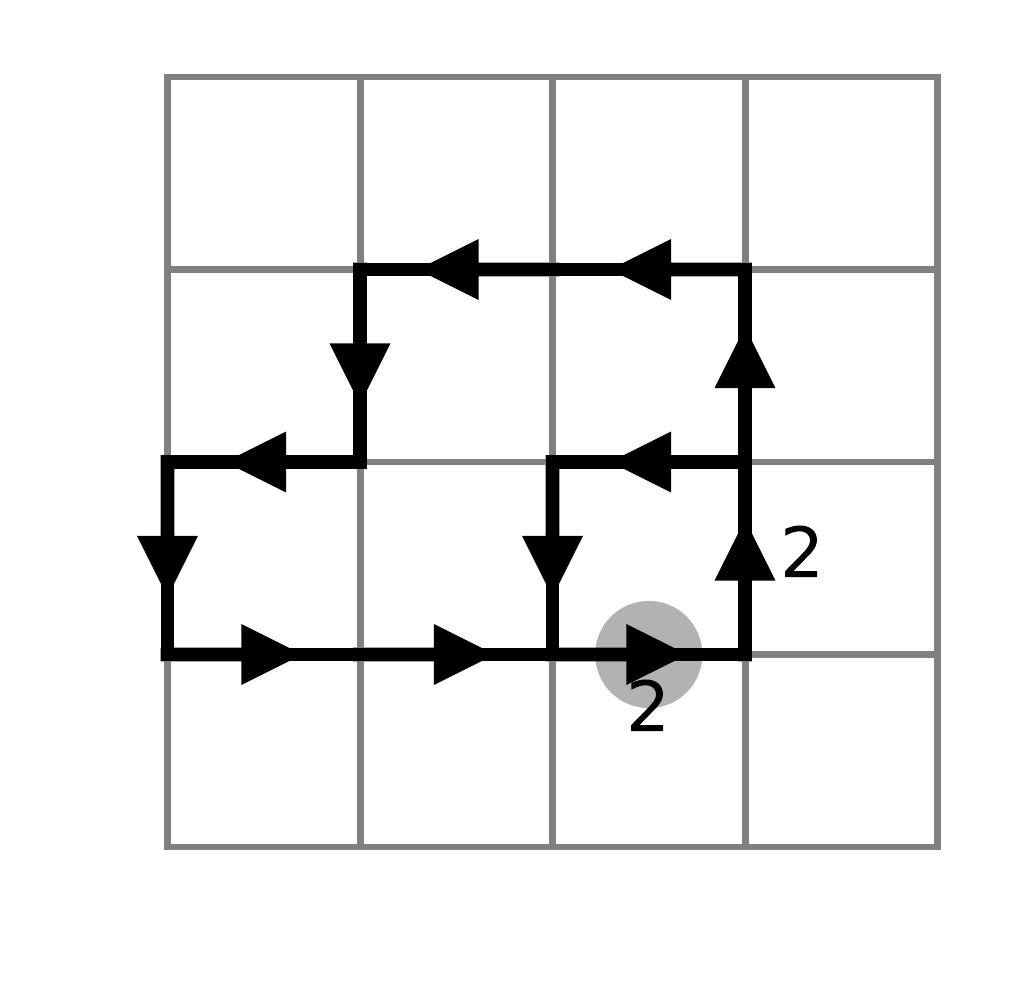}}
  \Large $\rightarrow$
  \parbox[c]{1.3in}{\includegraphics[width=1.2in]{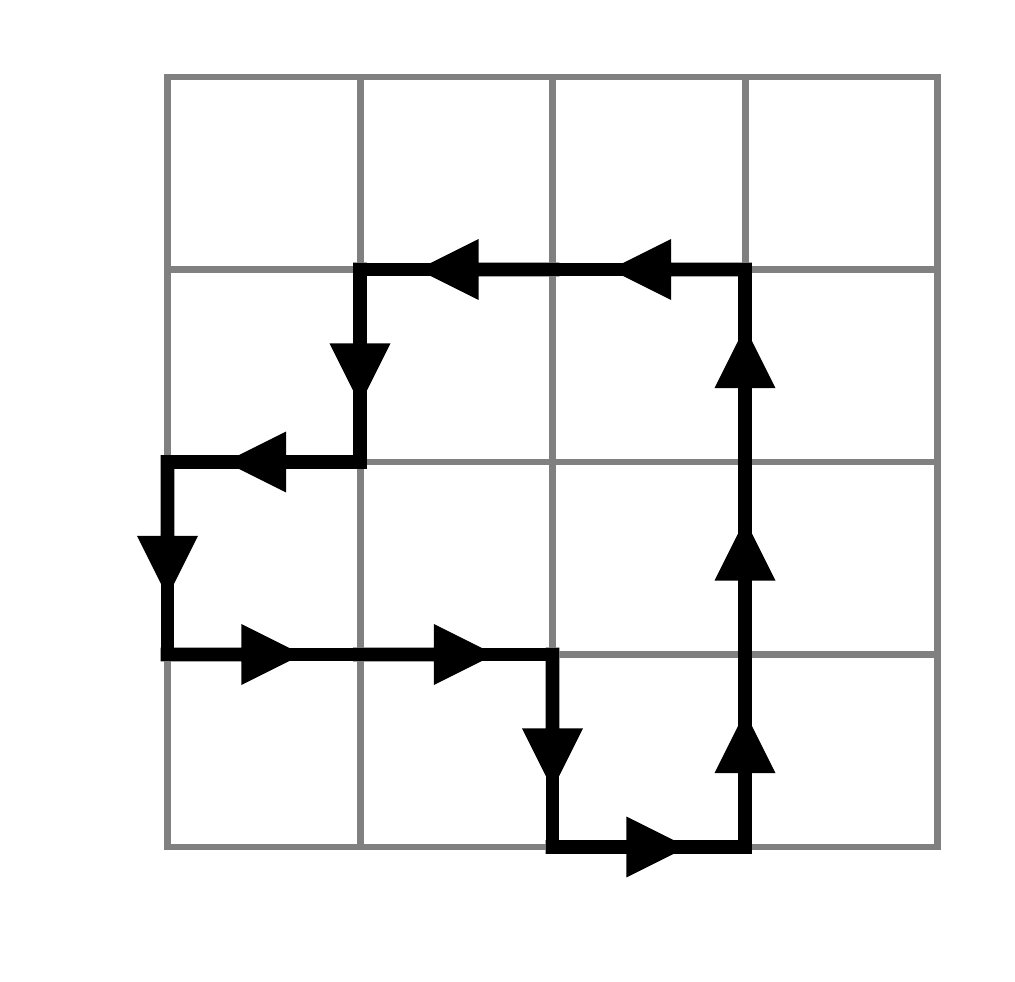}}
\caption{Example of the flow-firing process.  In each step the
  highlighted edge \emph{fires} and 2 units of flow are rerouted
  across two faces.  The process terminates when no edge
  has 2 or more units of flow.}
\label{fig:firing}
  \end{figure}

The flow-firing process is a form of higher-dimensional chip-firing as
introduced in~\cite{Critical}, see also~\cite{Book}[Chapter 7].  In
graphical chip-firing, a \emph{chip configuration} is an integer
assignment (of chips) to the \emph{vertices} of a graph.  The firing
rule is that a vertex $v$ can fire if the number of chips at $v$ is at
least $\deg(v)$.  Firing $v$ decreases the value at $v$ by $\deg(v)$
and increases the value at each neighbor of $v$ by $1$, see
Figure~\ref{fig:pgraph}.

\begin{figure}
  \centering
  \parbox[c]{1.3in}{\includegraphics[width=1.2in]{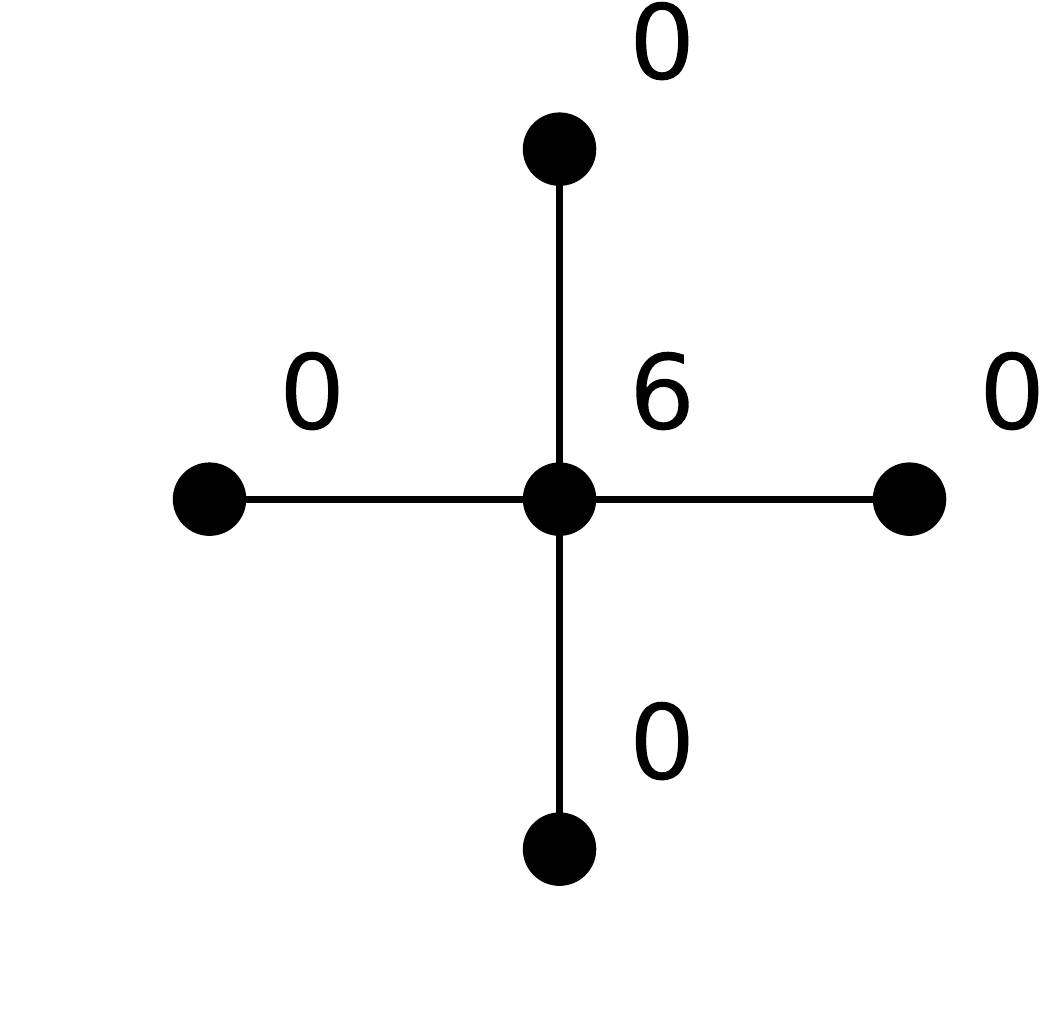}}
  \Large $\rightarrow$
  \parbox[c]{1.3in}{\includegraphics[width=1.2in]{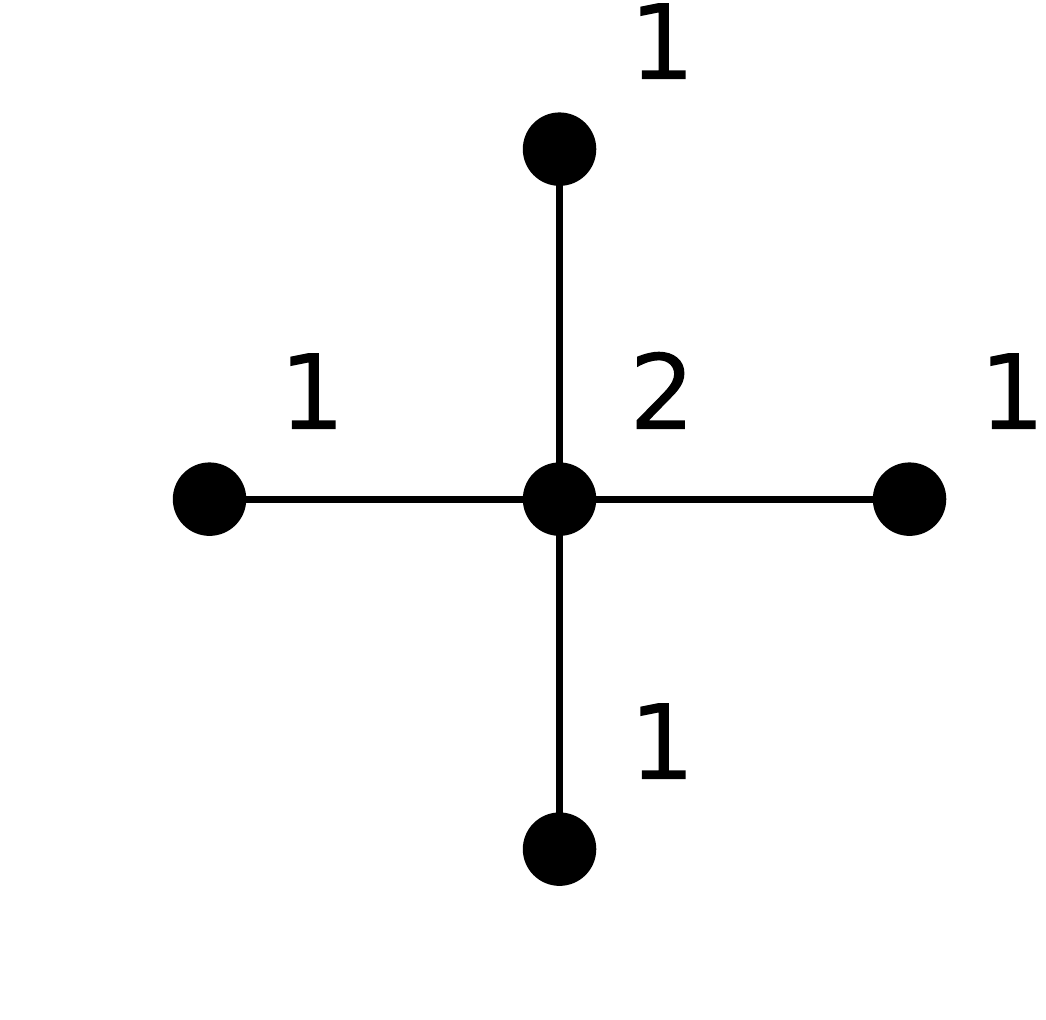}}
  \caption{Graphical chip-firing.}
  \label{fig:pgraph}
\end{figure}

Graphical chip-firing is $1$-dimensional.  Chips are on
$0$-dimensional vertices and move across $1$-dimensional edges.
Flow-firing is $2$-dimensional.  Flow is on $1$-dimensional edges and
moves across $2$-dimensional faces.  The degree of an edge $e$ is the
number of faces containing $e$.  For the grid graph, the degree of
each edge is $2$, hence the need for $2$ units of flow for an edge to
fire.  Two edges are neighbors if they are contained in a common face.
When an edge $e$ fires, flow is diverted from $e$ to neighboring
edges.

The result of firing a vertex in graphical chip-firing can be
expressed in terms of the graph Laplacian, $\Delta_1(G)$.  If $c'$ is
the configuration obtained from $c$ after firing vertex $i$, then $c'
= c - \Delta_1(G)e_i$.  Similarly, flow-firing can be expressed in
terms of a combinatorial Laplacian, $\Delta_2(G)$.  The
two-dimensional Laplacian of a complex reflects the degrees and
incidence relations between faces of the complex.  If $f'$ is the
configuration obtained from $f$ after firing edge $i$, then $f' = f
\pm \Delta_2(G)e_i$.  The sign of the update depends on the
orientation of the flow on edge $i$ in $f$.

There are important differences between $1$-dimensional and
$2$-dimensional chip-firing.  Graphical chip-firing on the infinite
grid always terminates if started from a chip configuration with
finite support.  This is not the case in flow-firing.  In
Section~\ref{sec:terminate} we show that the flow-firing process
always terminates if started from a \emph{conservative} flow (a
circulation) with finite support.

Another important difference is that in graphical chip-firing, the
total number of chips is conserved.  In the flow-firing process, the
quantity $$\inflow(v) - \outflow(v)$$ is conserved (at each vertex)
instead.  

Note that in the flow-firing process a rerouting operation can lead to
cancellation when flow runs in opposite directions across an
edge, see Figure~\ref{fig:firing}.

The cancellation of flow, as seen in Figure~\ref{fig:firing}, cannot
happen in graphical chip-firing.  When a vertex $v$ fires, the number
of chips at vertices other than $v$ can only increase.  This simple
observation leads to an important property of graphical chip-firing
known as local confluence.  Local confluence refers to the fact that
from a fixed configuration $c$, if two different states $c_1$ and
$c_2$ can be reached after a single step, then there is a common state
reachable from both $c_1$ and $c_2$ after a single step.  A system
that satisfies this property is also said to satisfy the diamond
lemma.  Local confluence in terminating systems implies \emph{global
  confluence} \cite{New}.  In graphical chip-firing, this fact
tells us that, if the chip-firing process from an initial
configuration terminates, then it terminates in a unique final
configuration regardless of the choices made at each step.

Because of cancellation of flow, the flow-firing process does not
satisfy the diamond lemma, see Figure~\ref{fig:diamondfail}.  In fact,
we show that in the flow-firing process there can be many terminating
states from a single initial configuration, see
Section~\ref{sec:terminate}.

In Section~\ref{sec:confluent} we consider a modification of
flow-firing which displays global confluence despite not satisfying
local confluence.  Global confluence without local confluence has
recently been observed in other chip-firing contexts.  \emph{Labeled
  chip-firing}, see~\cite{Sorting}, is an example. Labeled chips are
fired along the path graph, larger to the right and smaller to the
left.  Depending on the parity of the initial number of chips, the
labeled chip-firing process terminates in a unique final configuration
(with the chips sorted) even though the process does not satisfy the
diamond lemma.

The chip-firing processes on root systems introduced in
\cite{galashin2017root} and \cite{galashin2} generalize labeled
chip-firing.  Again, the root systems processes do not satisfy the
diamond lemma, nonetheless many cases do display global confluence.

In general, establishing global confluence without local confluence
has proved difficult.  For the flow-firing process we introduce a
topological hole to the grid (remove a square) and show that starting
from a conservative flow around the hole, the flow-firing process
satisfies global confluence despite not satisfying the diamond lemma,
see Theorem~\ref{thm:main}.
 
\begin{figure}
  \centerline{\includegraphics[width=1.2in]{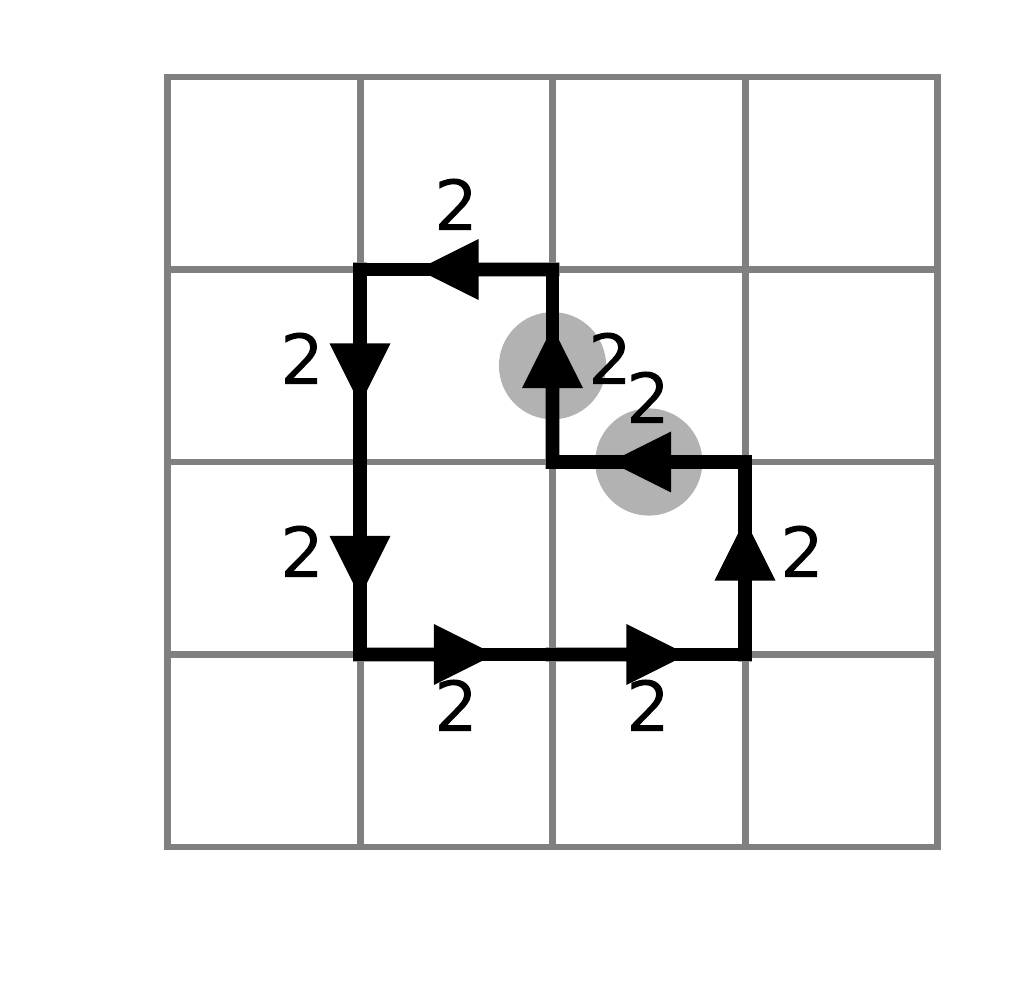}}
  \vspace{-.3cm} 
  \centerline{\Large $\;$ $\swarrow$ $\;\;\;$ $\searrow$}
  \centerline{\includegraphics[width=1.2in]{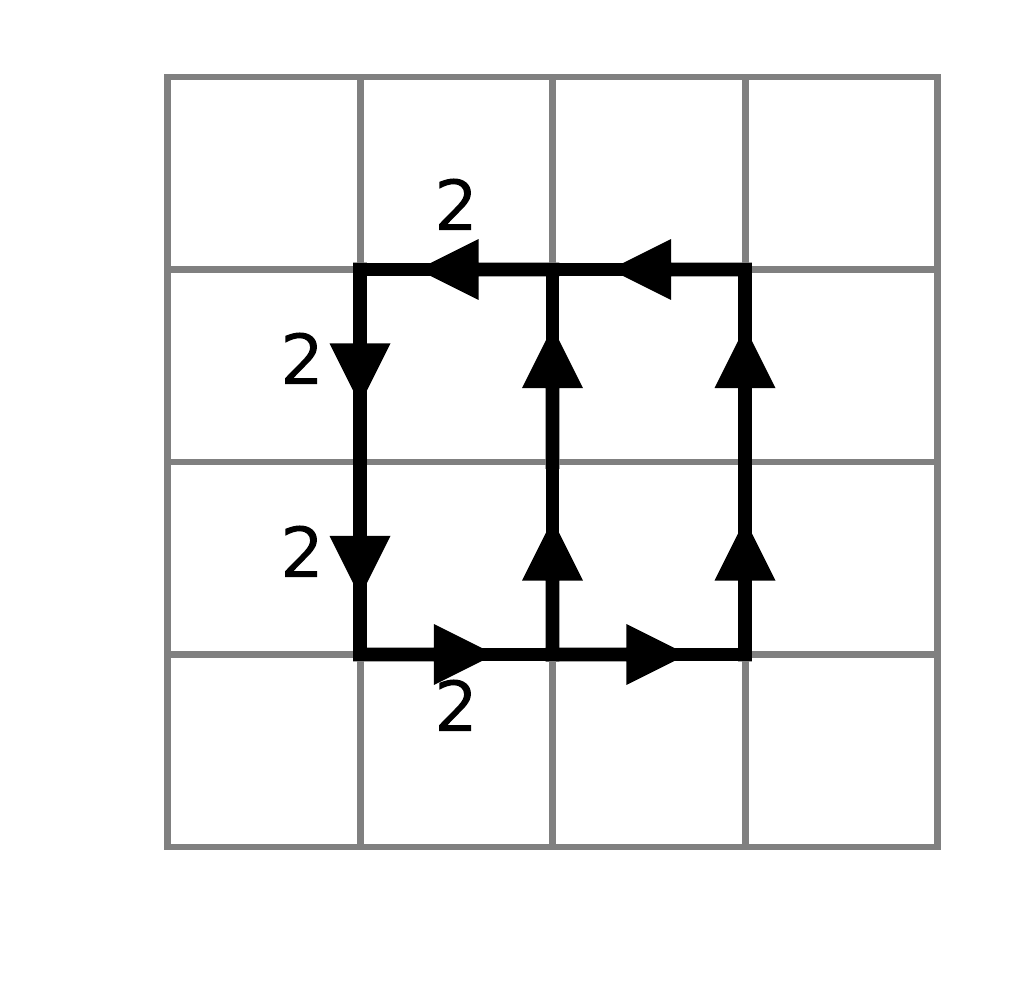}
    \includegraphics[width=1.2in]{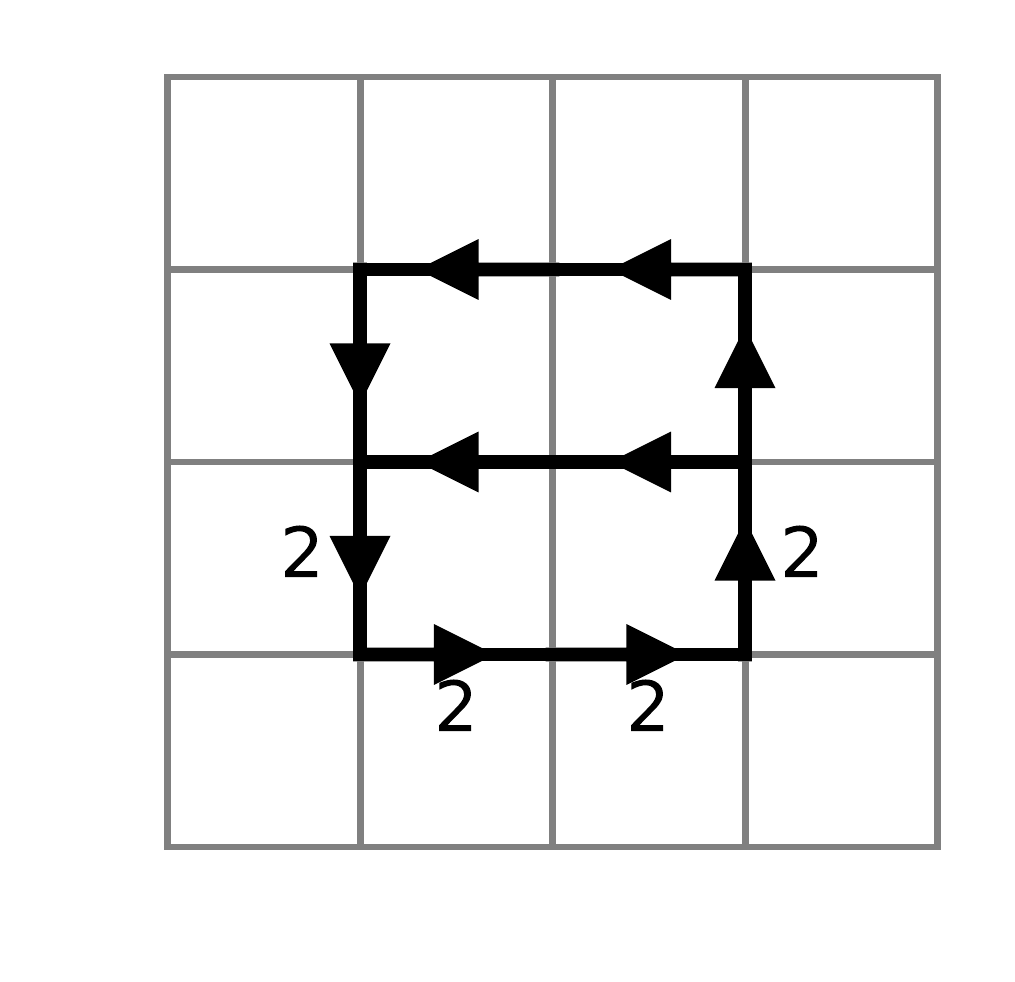}}
  \caption{Flow-firing does not satisfy the diamond property.  In the
    top configuration the two highlighted edges can fire but after one
    of the edges fires the other one can no longer fire.}
  \label{fig:diamondfail}
\end{figure}

\section{The pulse}

In graphical chip-firing, an important example is the \emph{pulse}
on the infinite grid.  The pulse configuration consists
of $n$ chips at the origin and $0$ chips elsewhere in $\Zz^d$.  For
$d=1$, this is chip-firing from a single stack of chips on a line.
The properties of chip-firing from a single stack on the line were
studied in detail in \cite{Spencer} and \cite{BallsWalls}.  The final
configuration is independent of the firing choices made throughout the
process and consists of a single chip in each position of an interval
centered at the origin; the origin itself has zero chips if the
initial number of chips is even.

For $d=2$, chip-firing from a stack of chips at the origin yields a
well known fractal pattern associated with chip-firing; see, e.g.,
\cite{paoletti2014abelian}, \cite{Book}[Chapter 5].  The final
configuration is again independent of the firing choices made
throughout. Yet this final configuration, resulting from the pulse on
the graph of the $2$-dimensional grid, has proved very difficult to
understand. Much work has gone into studying its properties; see,
e.g., \cite{le2002identity}, \cite{levine2009strong}, \cite{Scaling},
\cite{Apollonian2}.

The current paper can be seen as a first step in understanding the
basic properties of fundamental initial configurations (pulses of
flow) on higher dimensional spaces. 

\section{Flow on a single edge / Non-terminating}
\label{sec:single}

Following the graphical chip-firing examples of the pulse, one might
naturally consider an initial flow configuration consisting of a large
amount of flow on a single edge.  However, the flow-firing process
from such an initial configuration does not terminate.

Figure~\ref{fig:singleedge} shows an example of such an initial state
and the configurations resulting from the flow-firing process after
several steps.

\begin{figure}
  \centering
  \begin{tabular}{cccccc}
    & & \includegraphics[clip=true,trim=30 15 0 0, height=1.3in]{figures/F-start.pdf} \\
    \parbox[c]{1.5in}{\includegraphics[clip=true,trim=30 15 0 0, height=1.3in]{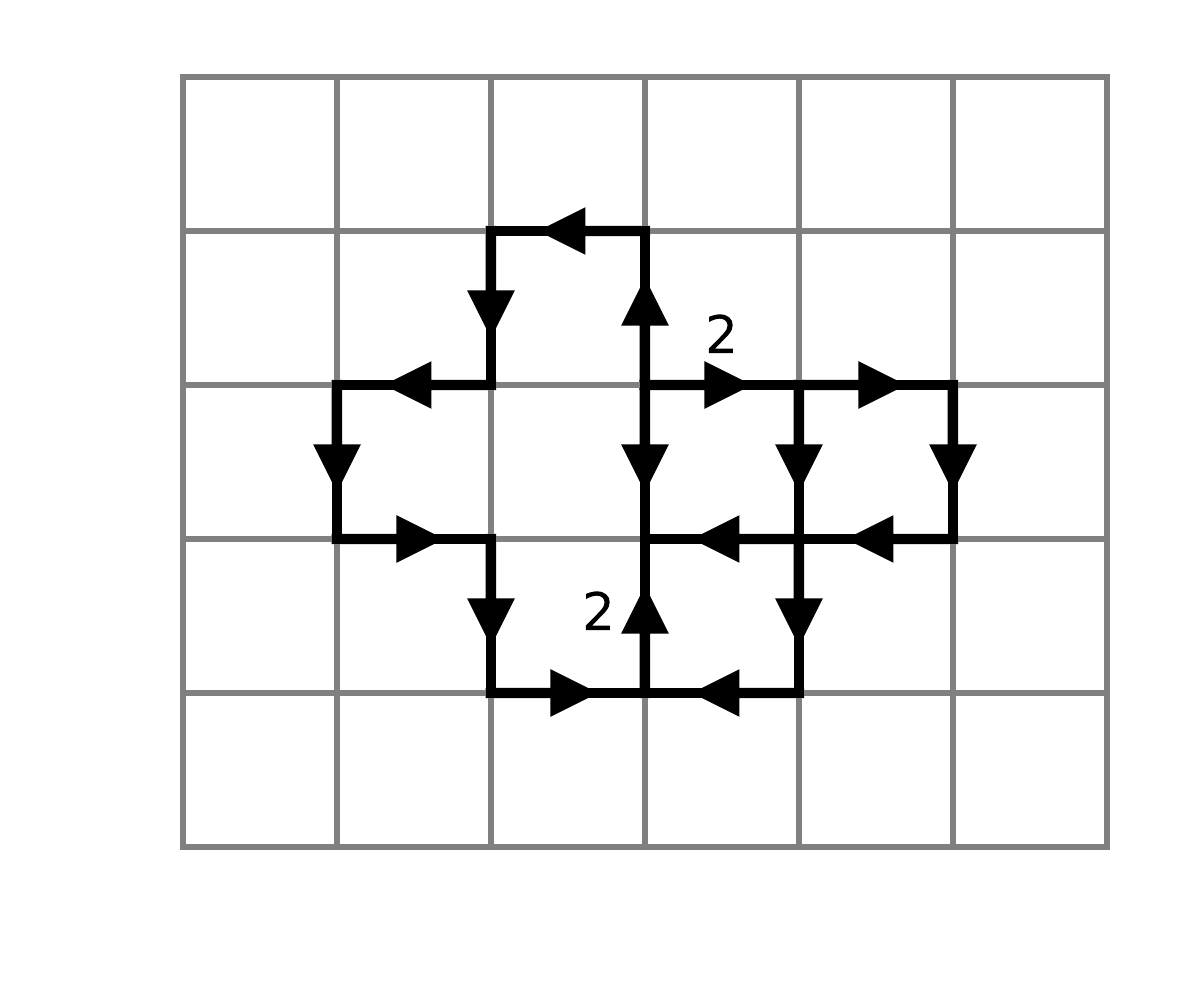}} &
    {\Large $\rightarrow$} &
    \parbox[c]{1.5in}{\includegraphics[clip=true,trim=30 15 0 0, height=1.3in]{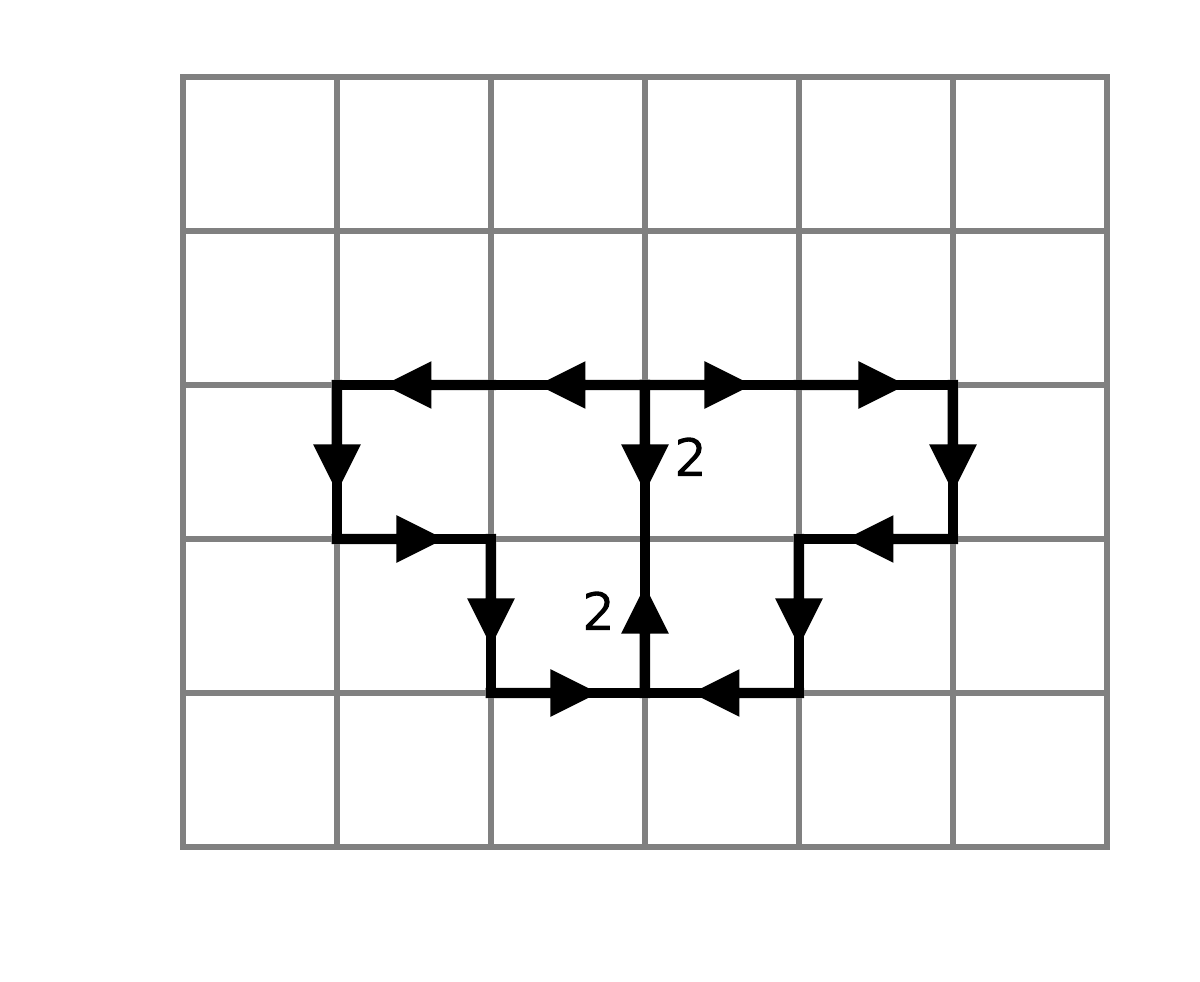}} &
    {\Large $\rightarrow$} &
    \parbox[c]{1.5in}{\includegraphics[clip=true,trim=30 15 0 0, height=1.3in]{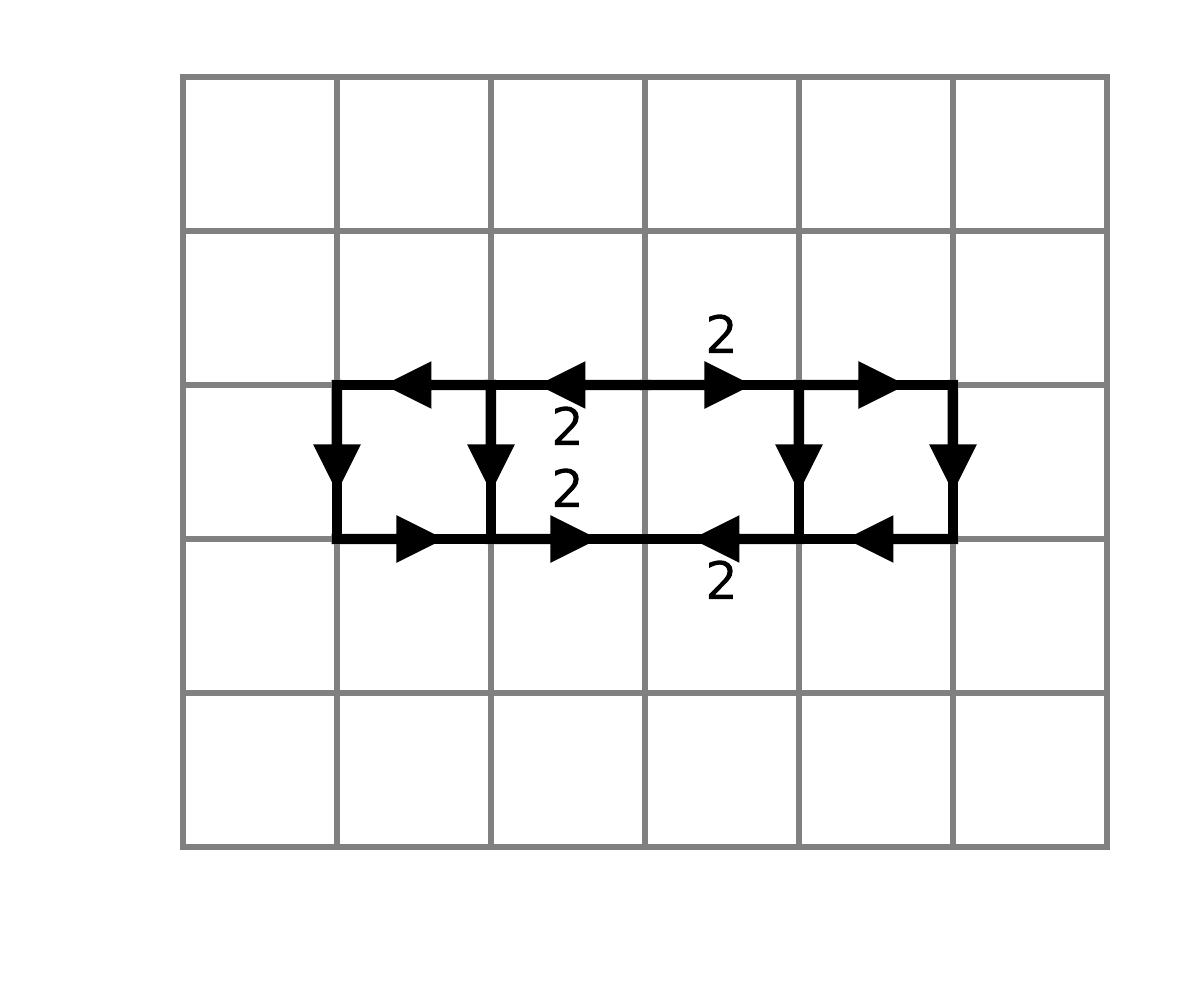}} &
    {\Large $\cdots$}
  \end{tabular}
  \caption{Some intermediate configurations reachable from a single
    edge flow.  This flow-firing process never terminates.}
  \label{fig:singleedge}
\end{figure}

\begin{proposition}
  \label{prop:pigeonhole}
  The flow-firing process on the grid does not terminate from any
  initial configuration which has a vertex $v$ with $|\inflow(v) -
  \outflow(v)| > 4$.
\end{proposition}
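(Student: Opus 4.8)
The plan is to pit the one conserved quantity of the process---the divergence $d(v) := \inflow(v) - \outflow(v)$ at each vertex---against a universal upper bound that this quantity must obey in any configuration from which no edge can fire. Since $d(v)$ is invariant under firing while a terminal configuration forces $|d(v)| \le 4$ everywhere, a starting configuration with $|d(v)| > 4$ at some vertex can never reach a terminal state, so the process runs forever.

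First I would record that firing preserves $d(v)$ at every vertex. Combinatorially, rerouting one unit of flow on an edge across a face replaces it by flow along the three remaining edges of that face, i.e.\ it adds an integer multiple of the boundary cycle of the square; firing an edge does this once around each of its two incident faces. Because the boundary of a $2$-cell is a $1$-cycle, it has zero net flow into or out of every vertex, so $d(v)$ is unchanged. The clean way to phrase this is that the firing update lies in the image of $\partial_2$, and $\partial_1 \partial_2 = 0$, so $\partial_1 f$---the vector of divergences---is preserved. This is precisely the conservation of $\inflow(v) - \outflow(v)$ noted in Section~\ref{sec:flows}; the only thing to check with care is the sign of the update when the flow on the fired edge is negative, but in every case the change is an integer multiple of a boundary cycle.

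Second I would bound $d(v)$ at a terminal configuration. A configuration is terminal exactly when no edge carries two or more units of flow in either direction, i.e.\ when every edge-flow lies in $\{-1,0,1\}$. In the grid each vertex has exactly four incident edges, and with the fixed orientations $d(v) = f_S + f_W - f_N - f_E$, where $f_S, f_W, f_N, f_E$ are the flows on the South, West, North, and East edges at $v$. Each term has absolute value at most $1$, so the triangle inequality yields $|d(v)| \le 4$ for every vertex of any terminal configuration.

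Combining the two steps finishes the argument: if the initial configuration has a vertex $v_0$ with $|d(v_0)| > 4$, then by invariance every configuration reachable by firing also has $|d(v_0)| > 4$, hence none of them is terminal. Consequently every reachable state still contains a fireable edge, no firing sequence can halt, and the process does not terminate. I do not expect a genuine obstacle here---this is the short pigeonhole argument suggested by the proposition's label---and the only point demanding attention is making the sign bookkeeping in the conservation step airtight.
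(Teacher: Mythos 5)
Your proposal is correct and is essentially the paper's own argument: the paper combines conservation of $\inflow(v)-\outflow(v)$ with a pigeonhole observation that $|\inflow(v)-\outflow(v)|>4$ at a degree-$4$ vertex forces some incident edge to carry at least $2$ units of flow, and your triangle-inequality bound $|d(v)|\le 4$ at terminal configurations is just the contrapositive of that pigeonhole step. The extra detail you supply on the conservation step (firing adds a boundary cycle, $\partial_1\partial_2=0$) is a fine elaboration of what the paper asserts without proof, not a different route.
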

\begin{proof}
  Suppose $|\inflow(v)-\outflow(v)| > 4$.  Since $\deg(v) = 4$, by the
  pigeonhole principle, some edge touching $v$ must have at least 2
  units of flow and can fire.  Since $\inflow(v)-\outflow(v)$ is
  conserved by rerouting there will always be an edge touching $v$
  that can fire.
\end{proof}

\section{Conservative flows / Terminating}
\label{sec:terminate}

\begin{definition}
  A flow configuration is \emph{conservative} if for each vertex $v$,
  $\inflow(v) - \outflow(v) = 0$.
\end{definition}

In this section we prove that the flow-firing process initiated at a
conservative flow always terminates in a finite number of steps.
First note that if the initial flow is conservative, it remains
conservative throughout the process.

Importantly, conservative flows allow for a dual representation
consisting of flow on faces.  For this representation, associate
an integer value to each face of the grid instead of each edge.  A
positive value is interpreted as a local clockwise circulation.  A
negative value is interpreted as a local counter-clockwise
circulation.

A flow configuration on the faces induces a flow configuration on the
edges.  For each edge $e$, the flow on $e$ is the sum of the flows
implied by the circulations around the two faces containing $e$.
Furthermore, any \emph{conservative} flow on the edges is induced from
some face configuration.  This follows from the fact that every
conservative flow is a sum of cycles and the boundaries of the faces
of a planar graph span the cycle space of the graph (see, e.g., \cite{FF,KV}).

Figure~\ref{fig:faces} shows an example of a conservative flow and the
corresponding face representation.

\begin{figure}
  \centerline{\parbox[c]{3.2in}{\includegraphics[width=1.5in]{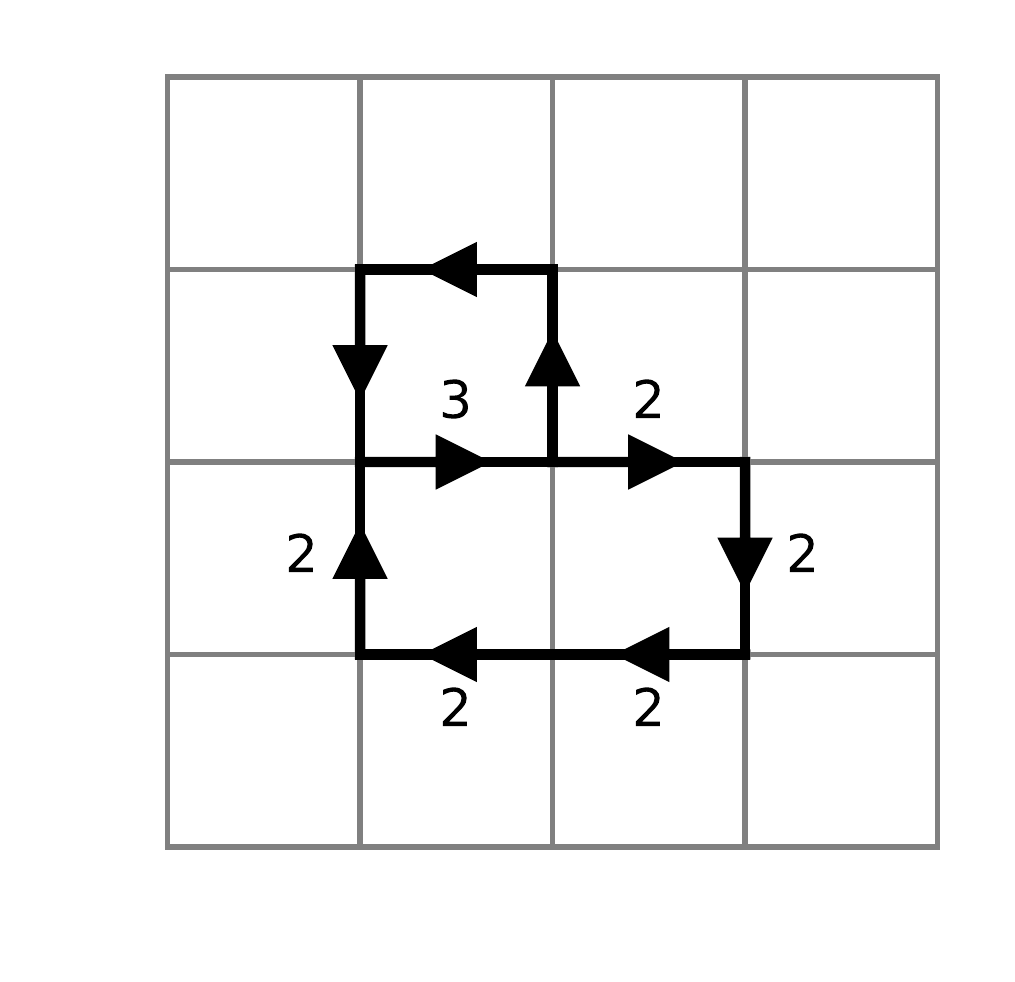}
      \includegraphics[width=1.5in]{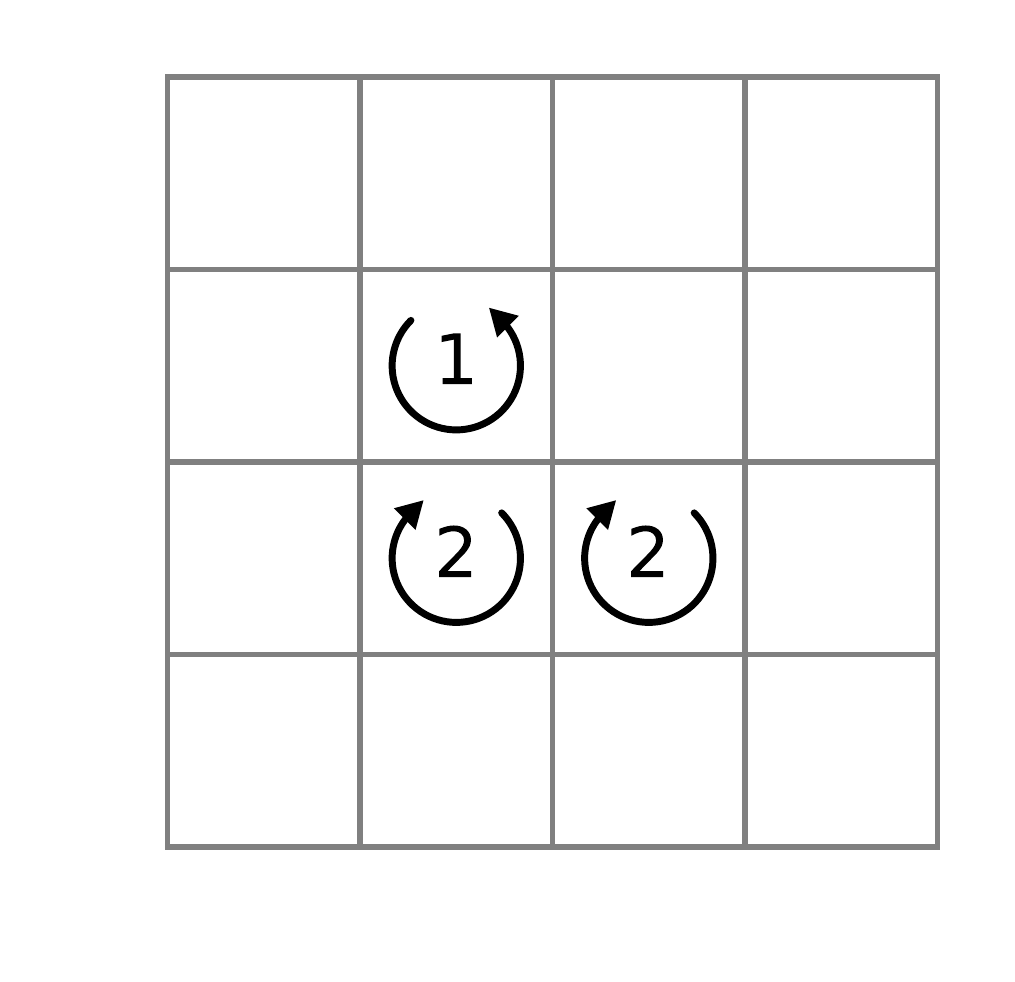}} $F=(\ldots,2,2,\ldots,-1,\ldots)$}
  \caption{A flow configuration and the corresponding face representation.}
  \label{fig:faces}
\end{figure}

Note that the conservative condition is necessary.  A configuration
with flow on a single edge, as considered the previous section, cannot
be represented by a configuration of face circulations.

In the flow-firing process an edge $e$ can fire if it has at least two
units of flow.  In the face representation this means that the values on
the two faces containing $e$ differ by at least two.  
Using a face representation $F$ the flow-firing process can 
be equivalently defined as follows.\\

\fbox{
  \parbox{.9\textwidth}{
    $\,$ \\
    \noindent{\bf The flow-firing process (face representation)}\\
    For the grid graph and conservative initial configuration\\
    $\,$ \\
    At each step:  
    \begin{itemize}  
    \item Choose two neighboring faces $a$ and $b$ with $F_a \ge F_b+2$.
    \item Fire $a$ and $b$ by decreasing $F_a$ by $1$ and increasing $F_b$ by $1$,
       \begin{align*}
        F_a & \rightarrow F_a - 1, \\
        F_b & \rightarrow F_b + 1.
      \end{align*}
    \end{itemize}
    \vspace{.2cm}
}} \\

The definition of the flow-firing process using the face
representation is perhaps more natural in comparison to graphical
chip-firing.  One can picture stacks of ``circulation chips'' on the
faces of $G$.  A firing move sends a chip from one stack to a
neighboring smaller stack.  A significant difference from graphical
chip-firing is that in flow-firing with the face representation,
circulation chips move to a single neighbor not to all neighbors at
the same time.

\begin{theorem}
  \label{thm:conservative}
  The flow-firing process on the grid starting from a finite
  conservative flow terminates after a finite number of steps.
\end{theorem}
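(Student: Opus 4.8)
The plan is to work entirely in the face representation $F$ introduced above and to exhibit a nonnegative, integer-valued monovariant that strictly decreases at every firing step; termination then follows at once because a strictly decreasing sequence of nonnegative integers must be finite.

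First I would make precise that a finite conservative flow corresponds to a \emph{finitely supported} face configuration. Since the flow has finite support, it is zero on every edge outside some large disk, so across each such edge the two bounding faces carry equal values; as the faces outside a disk are connected to one another through such edges, they all share a common value, which I normalize to $0$. With this normalization only finitely many faces $\sigma$ have $F_\sigma \neq 0$, so the quantity $\Phi(F) := \sum_\sigma F_\sigma^2$ is a finite nonnegative integer. A single firing alters only the two faces $a$ and $b$ with $F_a \geq F_b + 2$, so $\Phi$ stays finite (and the support stays bounded, since each nonzero face contributes at least $1$ to $\Phi$) throughout the process.

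Next I would compute the effect of one firing on $\Phi$. Replacing $F_a$ by $F_a - 1$ and $F_b$ by $F_b + 1$ changes $\Phi$ by
\[
(F_a-1)^2 + (F_b+1)^2 - F_a^2 - F_b^2 = 2\bigl(F_b - F_a + 1\bigr).
\]
Because the firing condition forces $F_a - F_b \geq 2$, this change is at most $-2$. Hence every firing strictly decreases $\Phi$ by at least $2$, and the process can run for at most $\tfrac{1}{2}\Phi(F_0)$ steps, where $F_0$ is the (normalized) initial face configuration.

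The main thing to get right is the reduction itself rather than the monovariant computation, which is routine. Specifically, I would need to verify carefully that the face representation of a finite conservative flow is genuinely finitely supported, so that $\Phi$ is well defined and finite, and to confirm that firing an edge in the edge picture corresponds exactly to the single-chip move $F_a \to F_a - 1$, $F_b \to F_b + 1$ in the face picture; both points follow from the discussion preceding the theorem, but the finiteness of $\Phi$ is the load-bearing step, since the entire argument collapses if the support could escape to infinity. Once finiteness of $\Phi$ is secured, the strict decrease and the well-ordering of the nonnegative integers finish the proof.
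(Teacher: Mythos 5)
Your proposal is correct and matches the paper's own argument: both pass to the face representation and use the potential $\sum_\sigma F_\sigma^2$, computing that each firing step decreases it by $2(F_a - F_b - 1) \ge 2$, whence termination in at most half the initial potential many steps. Your additional care in justifying that the face representation of a finite conservative flow is finitely supported (normalizing the common value on faces outside a large disk to $0$) is a valid elaboration of a point the paper treats briefly, not a different approach.
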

\begin{proof}
Let $f$ be a finite conservative flow on the edges of $G$.  Let $F$ be
the corresponding face representation.  Define the potential function
$$\phi(F) = \sum F_\sigma^2,$$ which is an infinite sum over all faces
of $G$ with finite non-zero support.

Suppose that neighboring faces $a$ and $b$ fire and that $F_a > F_b$.
Call the resulting configuration $F'$.  We have,
\begin{align*}
  F'_a &= F_a - 1, \\
  F'_b &= F_b + 1,
\end{align*}
and $F'_c=F_c$ at all other faces $c$.  The difference in potential is:
\begin{align*}
  \phi(F)-\phi(F') 
  &= [(F_a)^2+(F_b)^2] -[(F_a-1)^2 + (F_b+1)^2] \\
  &= (F_a)^2+(F_b)^2 - (F_a)^2 - (F_b)^2 + 2F_a  -2F_b -2\\
  &= 2F_a - 2F_b - 2\\
  &= 2(F_a - F_b - 1)\\
  &\ge 2.
\end{align*}

The last inequality follows from the fact that $F_a - F_b \geq 2$ or
else faces $a$ and $b$ could not fire.

The potential function $\phi$ is non-negative and strictly decreases
with each flow-firing step.  Therefore, starting from any
configuration with finite potential, the process must terminate in a
finite number of steps.
\end{proof}
  
Again, note that this argument does not apply to non-conservative
flows, such as the configurations with flow on a single edge
considered in the last section.  Non-conservative flows do not afford
a face representation and therefore the potential function $\phi$
cannot be defined.

Within the class of conservative flows a possible analog to the pulse
is a configuration with a large circulation around a single face.  In
terms of the face representation this corresponds to a large stack of
positive circulation chips on a single face.

\begin{corollary}
  The flow-firing process starting from $k$ units of flow around a
  single face terminates after a finite number of steps.
\end{corollary}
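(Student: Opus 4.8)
The plan is to recognize that this corollary is an immediate consequence of Theorem~\ref{thm:conservative}: I only need to check that a circulation of $k$ units around a single face qualifies as a finite conservative flow, and then the termination follows directly.

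First I would translate the initial configuration into the face representation. A circulation of $k$ units around a face $\sigma$ places $k$ units of flow on each of the four boundary edges of $\sigma$, consistently oriented (say clockwise), and zero flow everywhere else. In the dual face representation this is simply $F_\sigma = k$ and $F_\tau = 0$ for every other face $\tau$.

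Next I would verify conservation and finiteness. At each of the four vertices of $\sigma$, exactly one boundary edge of the circulation points in and one points out, so $\inflow(v) - \outflow(v) = 0$; at every other vertex all incident edges carry zero flow, so the condition holds trivially. The support is finite since only the four edges of $\sigma$ (equivalently, the single face $\sigma$) are nonzero. Hence the configuration is a finite conservative flow, and in particular $\phi(F) = k^2 < \infty$.

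Finally, having established that the starting configuration satisfies the hypotheses of Theorem~\ref{thm:conservative}, I would invoke that theorem to conclude that the flow-firing process terminates after a finite number of steps. There is no real obstacle here—the only content is the observation that a single-face circulation is conservative with finite potential, after which the potential-decrease argument of Theorem~\ref{thm:conservative} applies verbatim.
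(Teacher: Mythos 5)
Your proof is correct and matches the paper's intended argument exactly: the paper states this corollary without a separate proof precisely because, as you verify, a circulation of $k$ units around a single face is a finite conservative flow (face representation $F_\sigma = k$, zero elsewhere, with $\phi(F) = k^2 < \infty$), so Theorem~\ref{thm:conservative} applies immediately. Your explicit check of conservation at the vertices of $\sigma$ is the only content, and it is done correctly.
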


Figure~\ref{fig:tetris} illustrates the result of the flow-firing
process starting from $k=4$ units of flow around a face.  While
the process always terminates, there are many possible final
configurations.

\begin{figure}
  \centering

  \includegraphics[clip=true,trim=30 15 0 0, height=1.2in]{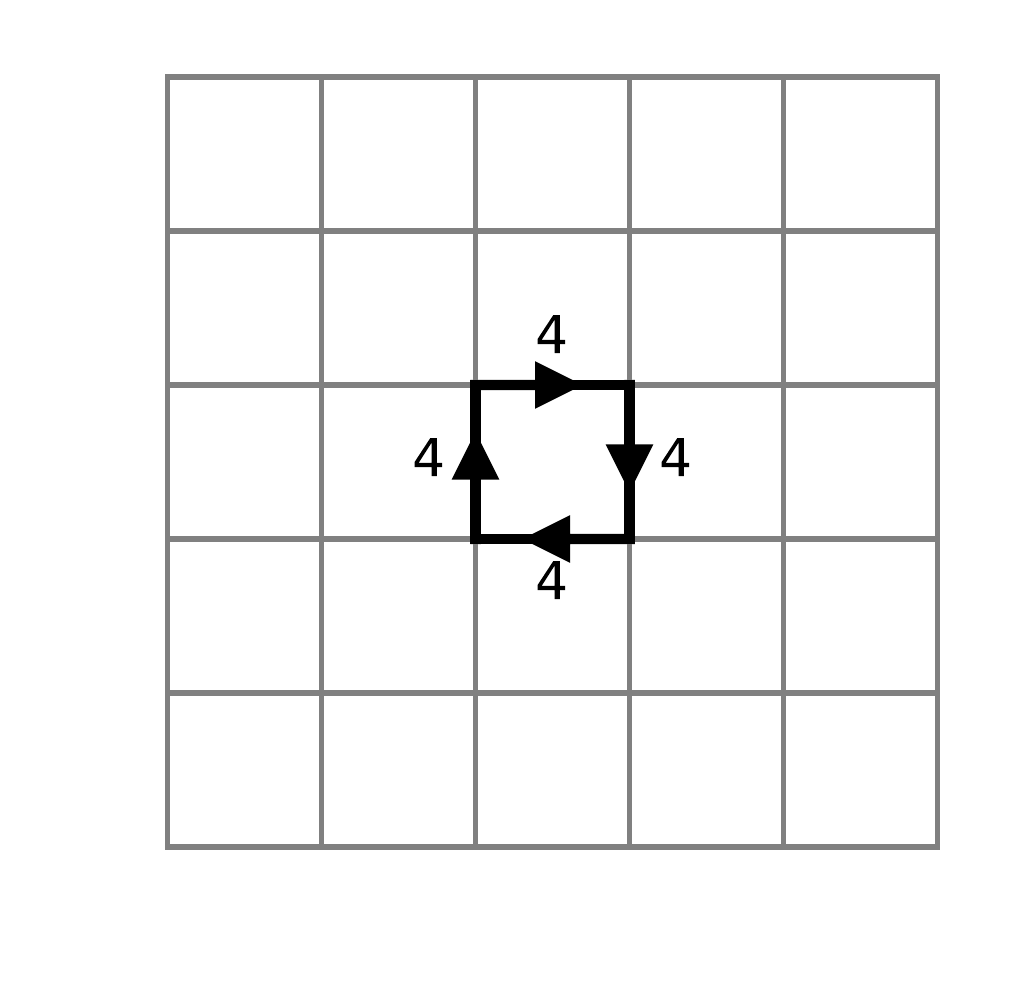}

  {\Large $\swarrow$ \hspace{.75cm} $\downarrow$ \hspace{.75cm} $\searrow$}
  
  {\huge $\cdots$}
  \parbox[c]{1.2in}{\includegraphics[clip=true,trim=30 15 0 0, height=1.2in]{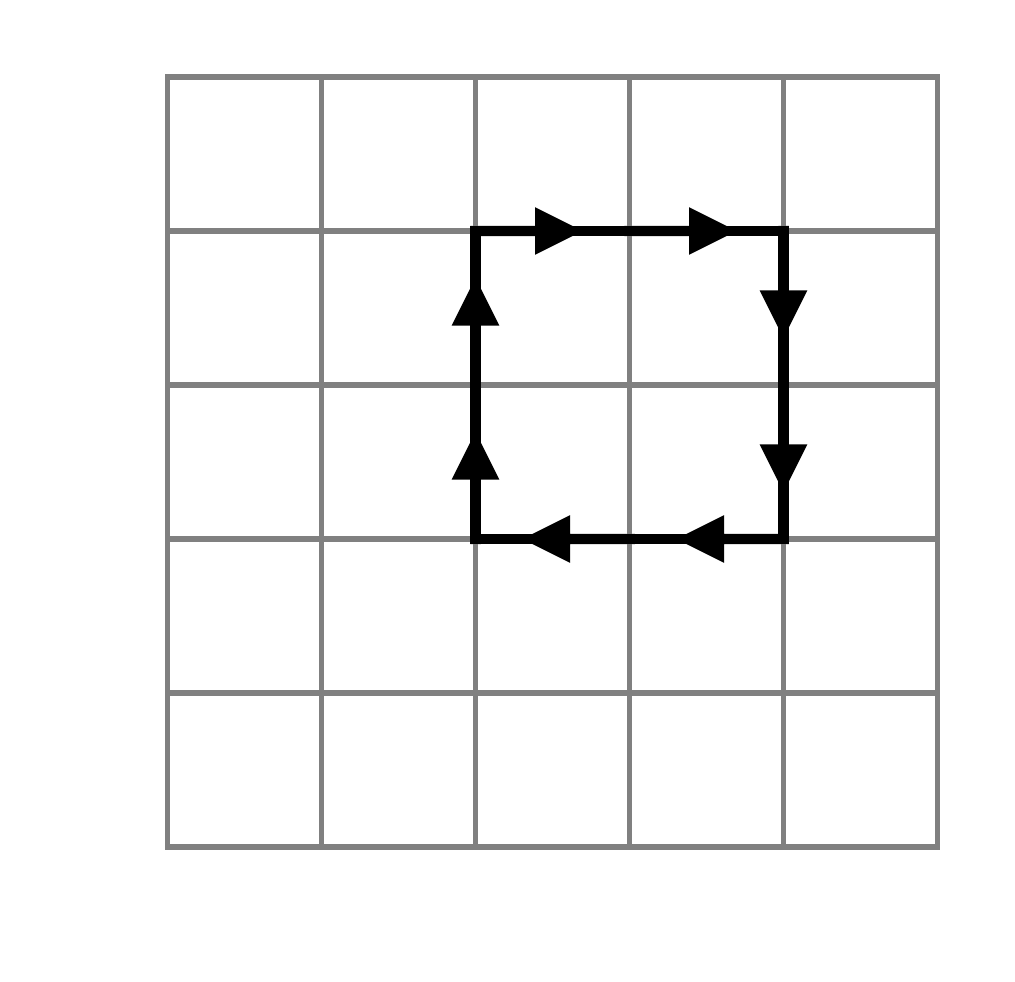}}
  \parbox[c]{1.2in}{\includegraphics[clip=true,trim=30 15 0 0, height=1.2in]{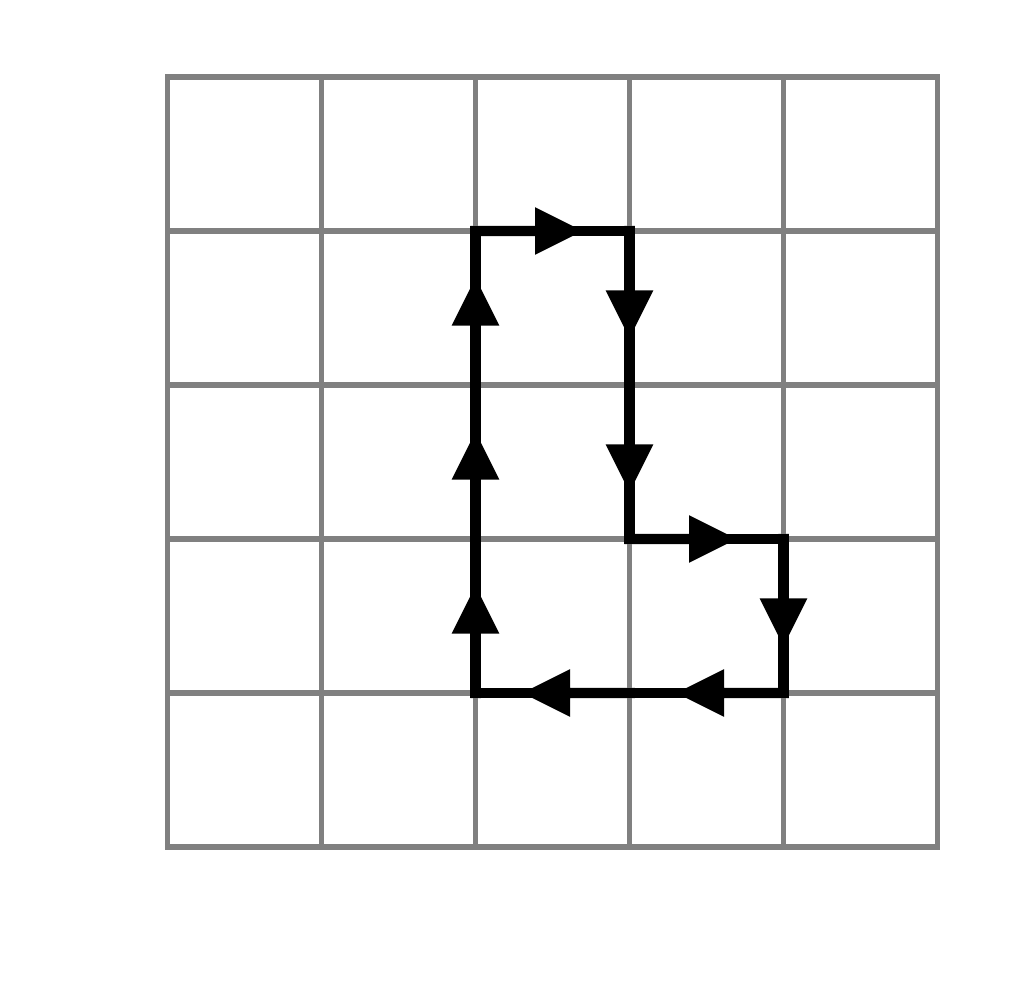}} 
  \parbox[c]{1.2in}{\includegraphics[clip=true,trim=30 15 0 0, height=1.2in]{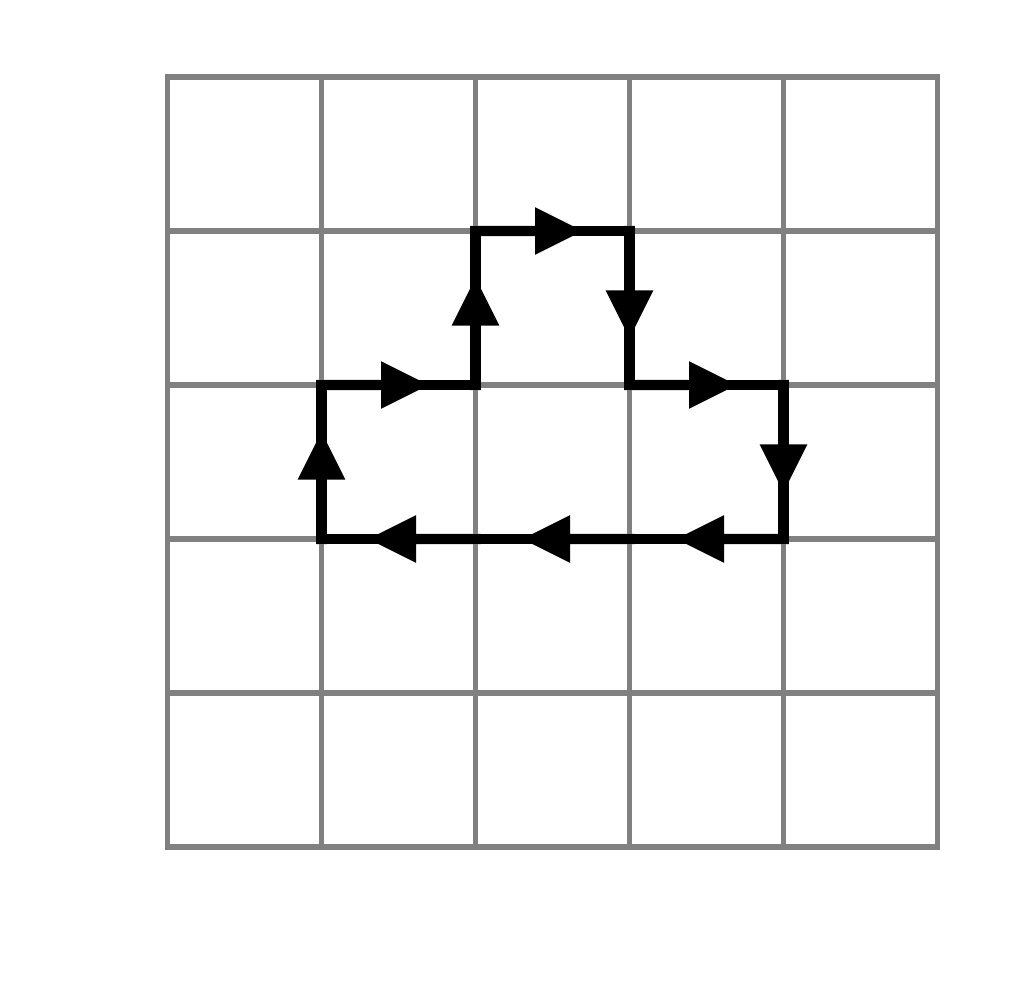}}   
  {\huge $\cdots$}

  (a) Edge representation.
  
  \vspace{.5cm}
  \includegraphics[clip=true,trim=30 15 0 0, height=1.2in]{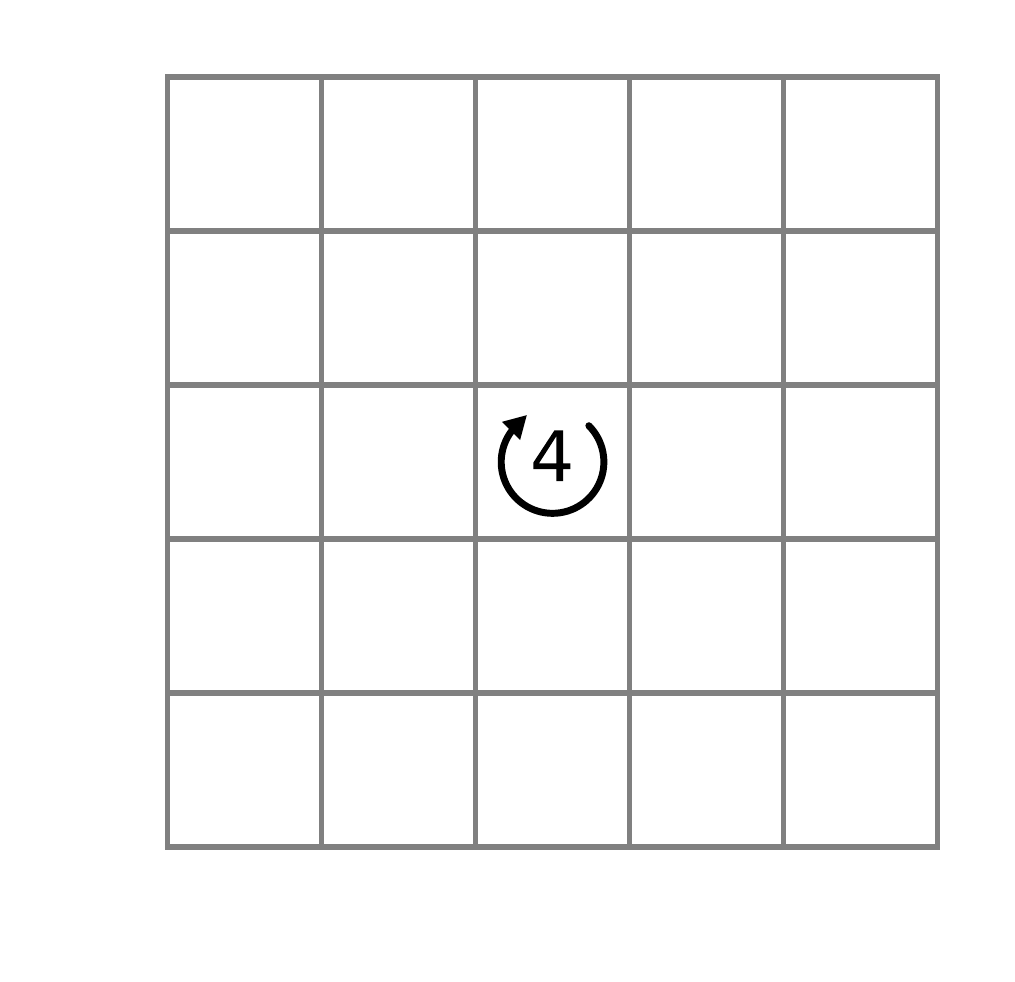}

  {\Large $\swarrow$ \hspace{.75cm} $\downarrow$ \hspace{.75cm} $\searrow$}
  
  {\huge $\cdots$}
  \parbox[c]{1.2in}{\includegraphics[clip=true,trim=30 15 0 0, height=1.2in]{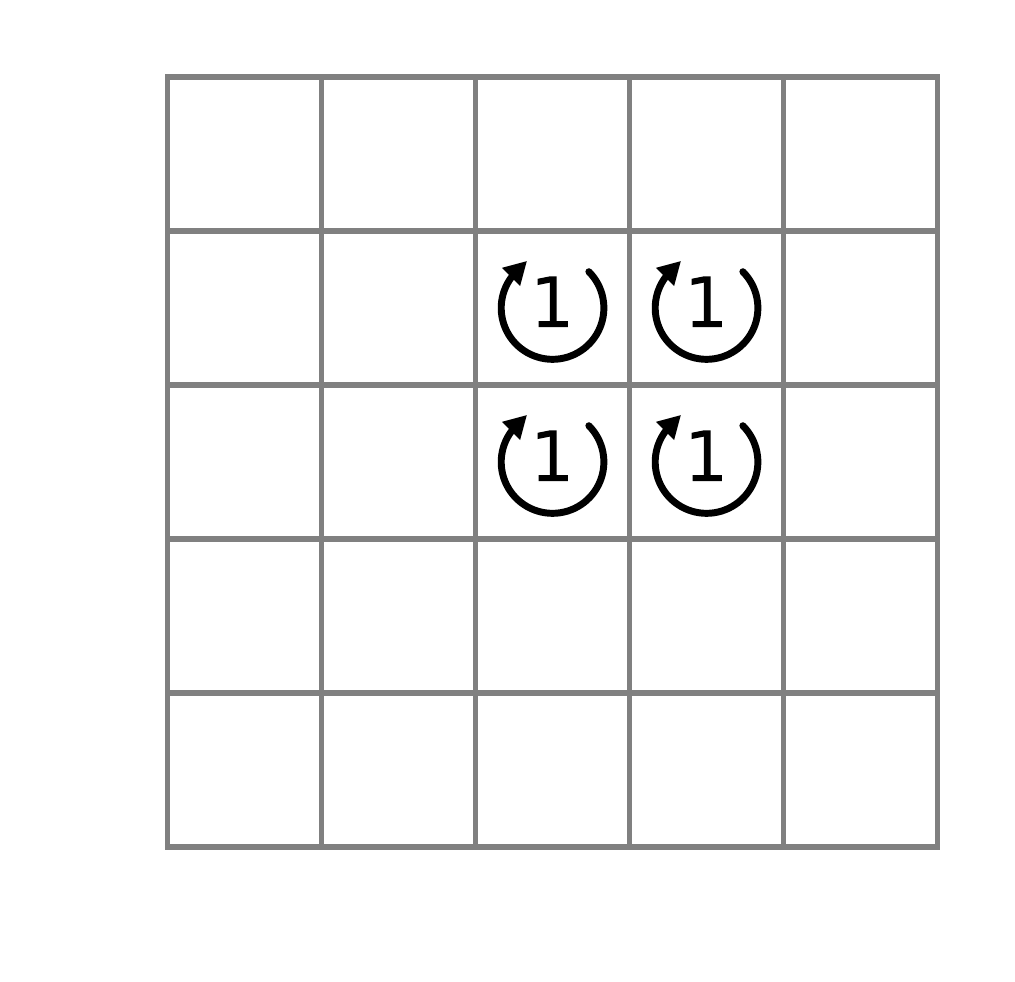}}
  \parbox[c]{1.2in}{\includegraphics[clip=true,trim=30 15 0 0, height=1.2in]{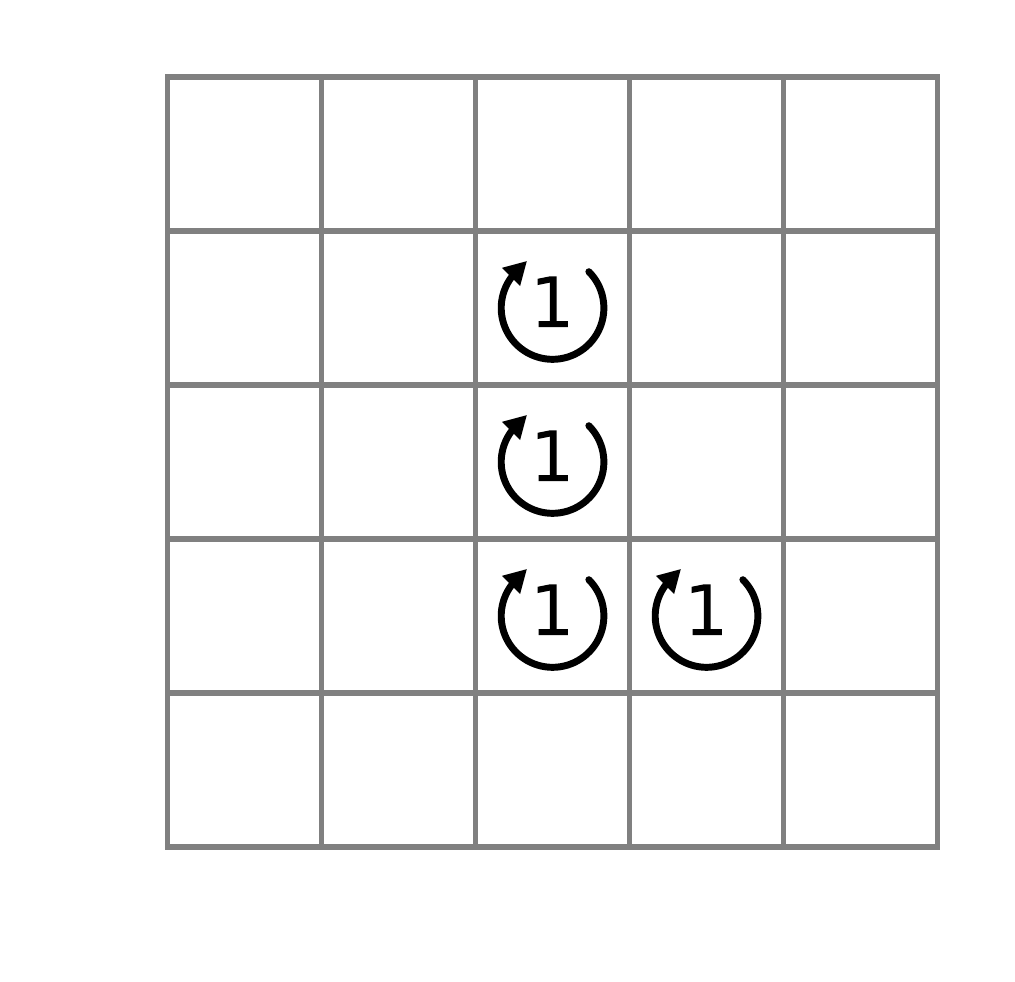}} 
  \parbox[c]{1.2in}{\includegraphics[clip=true,trim=30 15 0 0, height=1.2in]{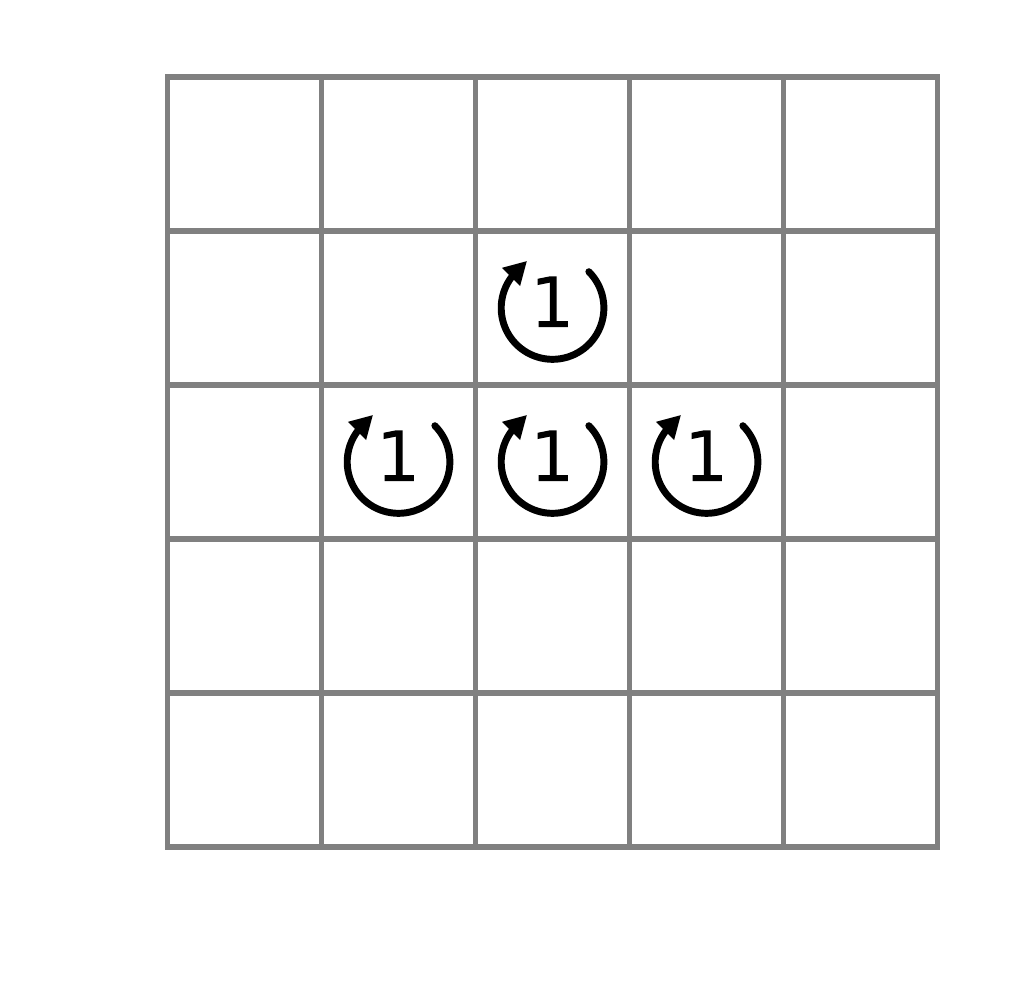}}   
  {\huge $\cdots$}

  (b) Face representation.
  
\caption{Starting with $k=4$ units of flow around a face always
  terminates but there are many possible final configurations.}
\label{fig:tetris}
\end{figure}

\section{Conservative flows around a boundary / Confluent}
\label{sec:confluent}

We next consider the grid graph with a distinguished face (square).
In terms of flow rerouting, the distinguished face behaves like an
obstruction or hole -- flow on an edge incident to the distinguished
face cannot divert across the distinguished face.  In terms of the
face representation (for conservative flows), the distinguished face
behaves like a source or sink -- flow on adjacent faces determines
which behavior is seen.  A value at the distinguished face can be
thought of as a boundary condition for the flow-firing process.

Formally, let $G$ be the grid graph again embedded as $\mathbb{Z}^2$ and
let $\sigma$ be a fixed face of $G$. Define the following process.\\

\fbox{
  \parbox{.9\textwidth}{
    $\,$ \\
    \noindent{\bf The flow-firing process (edge representation)}\\
    For the grid graph with distinguished face $\sigma$\\
    $\,$\\
    At each step:  
    \begin{itemize}
    \item Choose an edge $e$.
    \item If $e \not\subset \sigma$ and $e$ has $2$ or more units of
      flow (in either direction) then 1 unit of \\ flow is rerouted around each of the two faces containing $e$.
    \item If $e \subset \sigma$ and  $e$ has $1$ or more units of
      flow (in either direction) then 1 unit of \\ flow is rerouted around the unique
      face not equal to $\sigma$ containing $e$.
    \end{itemize}
    \vspace{.2cm}
}} \\

For conservative flows we have the following equivalent description of
the process using the face representation introduced in Section~\ref{sec:terminate}.\\

\fbox{
  \parbox{.9\textwidth}{
    $\,$ \\
    \noindent{\bf The flow-firing process (face representation)}\\
    For the grid graph with distinguished face $\sigma$ and conservative initial configuration\\
    $\,$ \\
    At each step:
    \begin{itemize}
    \item Choose two neighboring faces $a$ and $b$.
    \item If $a \neq \sigma$, $b \neq \sigma$ and
      $F_a \ge F_b-2$, decrease $F_a$ by $1$ and increase $F_b$ by $1$,
      \begin{align*}
        F_a & \rightarrow F_a - 1, \\
        F_b & \rightarrow F_b + 1.
      \end{align*}
    \item If $b = \sigma$ and $F_{\sigma} > F_a$, increase $F_a$ by $1$,
      \begin{align*}
        F_a & \rightarrow F_a + 1. 
      \end{align*}
    \item If $b = \sigma$ and $F_{\sigma} < F_a$, decrease $F_a$ by $1$,
      \begin{align*}
        F_a & \rightarrow F_a - 1. 
      \end{align*}
    \end{itemize}
    \vspace{.2cm}
}} \\

In the first case (when $a$ and $b$ are both not equal to $\sigma$) we
say a circulation chip moves from $a$ to $b$.  In the second case we
say a circulation chip is created at $a$.  In the third case we say a
circulation chip is deleted from $a$.

Write $(G,\sigma)$ for the grid graph with distinguished face
$\sigma$.

\begin{proposition}
  \label{prop:maxmin}
  Under the flow-firing process with the face representation for $(G,\sigma)$:
\begin{enumerate}
\item\label{a} The maximum value over all faces does not increase. 
\item\label{b} The minimum value over all faces does not decrease. 
\item\label{c} The value at $\sigma$ does not change. 
\end{enumerate}
\end{proposition}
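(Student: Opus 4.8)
The plan is to observe that all three claims concern the effect of a single firing move, and that each property, once shown to be preserved by one move, is preserved throughout the process by induction on the number of steps. So it suffices to analyze a single move. Fix a configuration $F$ and let $M = \max_c F_c$ and $m = \min_c F_c$ be taken over all faces before the move. Since every move alters the value of at most the two chosen faces $a$ and $b$, I only need to check that the updated values at the affected faces again lie in the interval $[m,M]$, while separately tracking what happens at $\sigma$.

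Claim~(\ref{c}) is immediate from inspecting the firing rule: in the first (ordinary) case both $a$ and $b$ differ from $\sigma$, so $F_\sigma$ is untouched; in the creation and deletion cases we have $b=\sigma$, but the only value updated is $F_a$, while $F_\sigma$ appears only in the guard conditions and is itself never incremented or decremented. Hence $F_\sigma$ is invariant throughout the process.

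For~(\ref{a}) and~(\ref{b}) I would run the same case analysis in parallel. In the ordinary case $a,b \neq \sigma$ the move requires $F_a \ge F_b+2$ (as in the terminating version of the process), so after the move $F_a$ decreases and stays $\le M$, while $b$ becomes $F_b+1 \le F_a-1 \le M-1 < M$; symmetrically $a$ becomes $F_a-1 \ge F_b+1 > m$ and $b$ increases, staying $\ge m$. In the creation case $b=\sigma$ with $F_\sigma > F_a$, the value $F_a$ increases to $F_a+1 \le F_\sigma \le M$, and of course stays $\ge m$. In the deletion case $b=\sigma$ with $F_\sigma < F_a$, the value $F_a$ decreases to $F_a-1 \ge F_\sigma \ge m$, and stays $\le M$. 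Collecting these, after any single move every face value remains in $[m,M]$, so the maximum cannot increase and the minimum cannot decrease.

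The verification is essentially routine; the only point requiring care is the correct use of the threshold inequalities. The conceptual content is that a chip is only ever moved from a strictly larger stack toward a strictly smaller one, or, at $\sigma$, that $F_a$ is nudged one step toward the fixed boundary value $F_\sigma$, so no updated value can overshoot the current global extremes. I do not anticipate a genuine obstacle here; the main thing to get right is checking that the inequalities $F_a \ge F_b+2$, $F_\sigma > F_a$, and $F_\sigma < F_a$ are each strong enough to keep the moved value from crossing $M$ from below or $m$ from above.
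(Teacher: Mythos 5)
Your proof is correct and takes essentially the same approach as the paper's: a direct inspection of the three firing rules showing that any updated value stays within the current extremes (the paper compresses this to the observation that every value-increasing move involves a face of strictly greater value, with $F_\sigma$ never altered). You were also right to read the ordinary-move guard as $F_a \ge F_b + 2$, consistent with the process of Section~\ref{sec:terminate}.
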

\begin{proof}
For (\ref{a}), note that all moves that increase the value of a face
involve a face of greater value, therefore the maximum value in a
configuration cannot increase.  Part (\ref{b}) is analogous.
Statement (\ref{c}) is the observation that the flow-firing rules
never alter the value of $F_{\sigma}$.
\end{proof}

From Proposition~\ref{prop:maxmin} part (\ref{b}) we see that starting
from a configuration of positively oriented face circulations we can
only ever generate configurations of positively oriented face
circulations.

For the remainder of the section, we consider the specific initial
configuration consisting of $k$ units of flow around $\sigma$.  The
face representation, $K$, for this configuration is,
$$K_\sigma = k \textrm{ and } K_\tau = 0 \textrm{ for all } \tau \neq \sigma.$$

Figure~\ref{fig:pyramid} shows the result of flow-firing starting from
$K$.  Surprisingly, as we show next, there is a unique final
configuration in this case.

\begin{figure}
  \centering
  \begin{tabular}{c}
  \parbox[c]{3in}{\includegraphics[clip=true,trim=30 30 0 0, height=3in]{figures/H-start.pdf}}
  {\Large $\rightarrow$}
  \parbox[c]{3in}{\includegraphics[clip=true,trim=30 30 0 0, height=3in]{figures/H-end.pdf}} \\
  \parbox[c]{3in}{\includegraphics[clip=true,trim=30 30 0 0, height=3in]{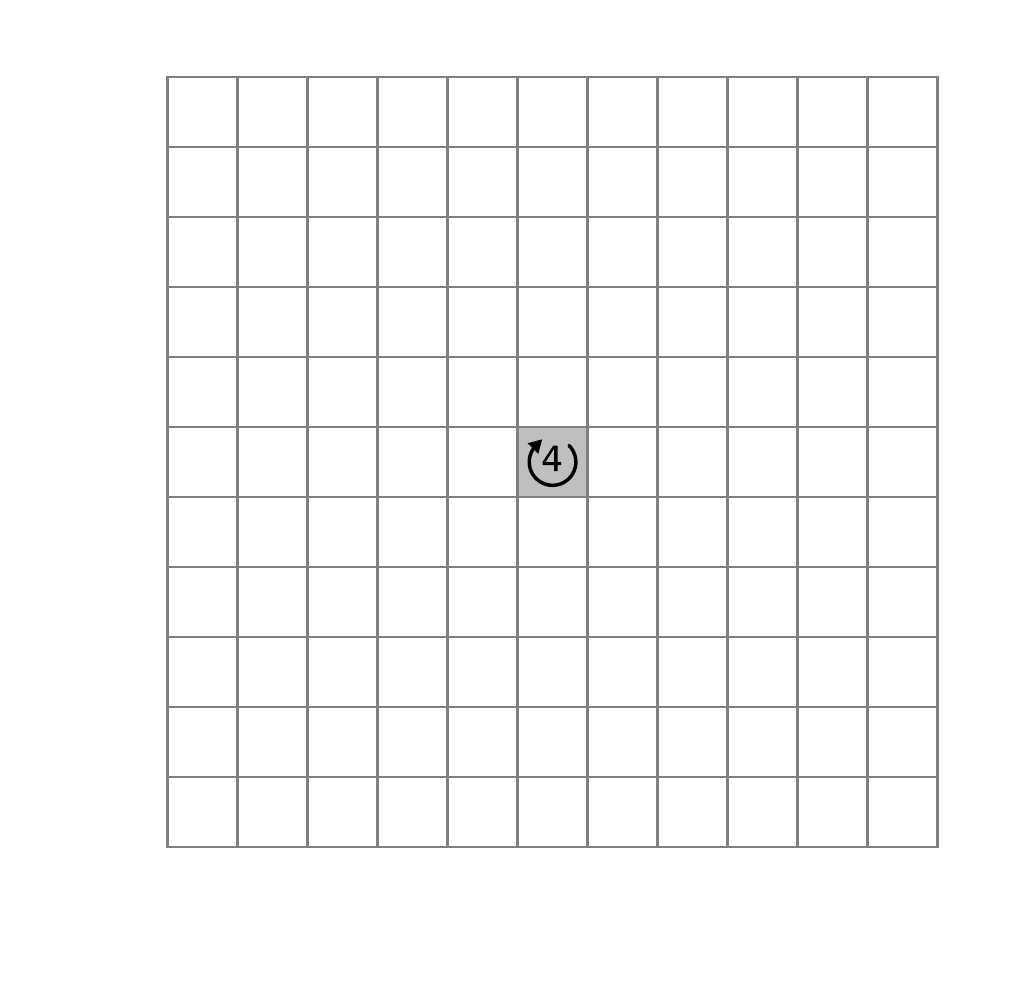}}
  {\Large $\rightarrow$}
  \parbox[c]{3in}{\includegraphics[clip=true,trim=30 30 0 0, height=3in]{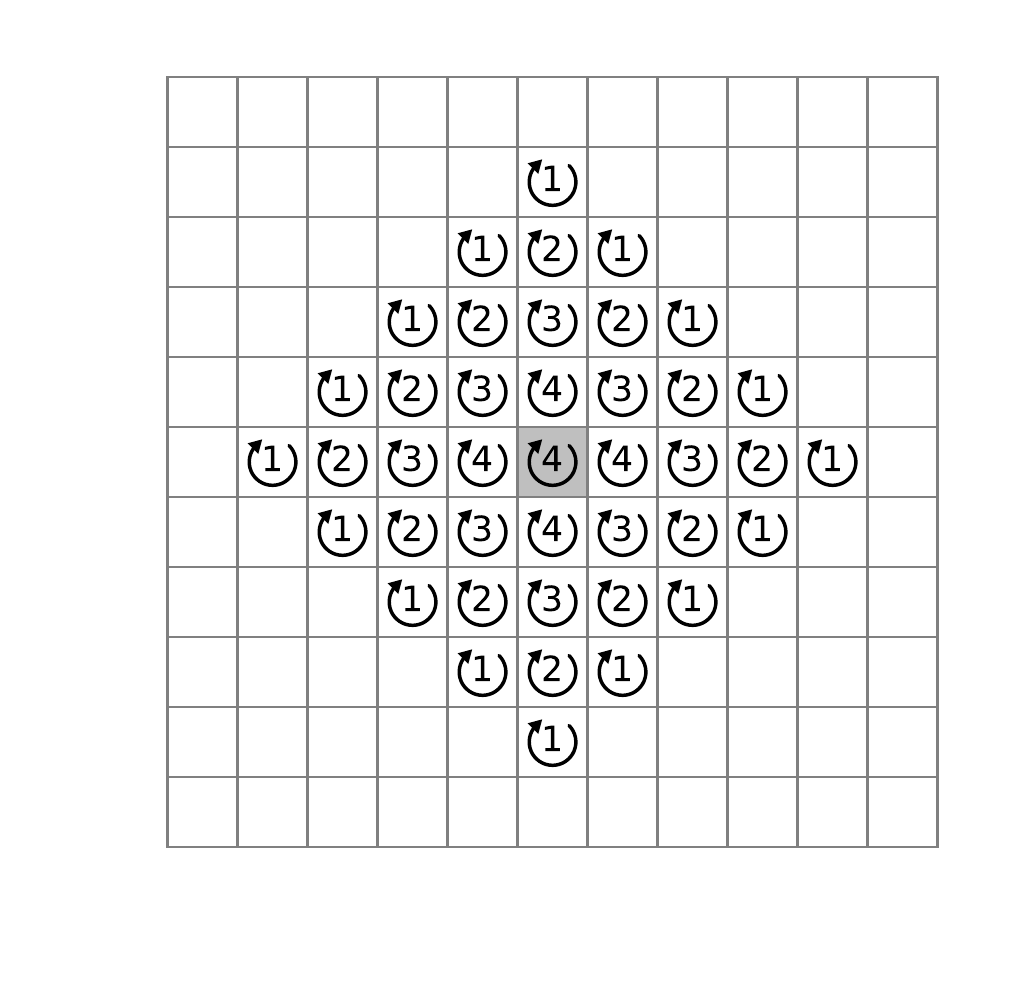}}  
\end{tabular}
\caption{Flow-firing starting with a configuration of $k=4$ units of
  flow around a distinguished face (a hole).  The top shows the edge representation and the
  bottom shows the face representation of the initial and final
  configurations.}
\label{fig:pyramid}
\end{figure}

Define $\dist(\sigma,\tau)$ to be the distance from $\sigma$ to $\tau$
in the dual graph of $G$.  For the grid graph this is the Manhattan
distance.

\begin{lemma}
  Let $K^*$ denote any configuration reachable from $K$ via the
  flow-firing process for $(G,\sigma)$.  Then for all faces
  $\tau\neq\sigma$ of $G$,
  $$K^*_{\tau} \le \max\{0, k - \dist(\sigma,\tau)+1\}.$$
\label{lem:bound}
\end{lemma}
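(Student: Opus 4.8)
The plan is to establish the bound as an invariant of the process and prove it by induction on the number of firing steps. Write $d(\tau)=\dist(\sigma,\tau)$ and $B(\tau)=\max\{0,\,k-d(\tau)+1\}$ for the claimed upper bound. For the base case, the starting configuration $K$ has $K_\tau=0$ for every $\tau\neq\sigma$ and $B(\tau)\ge 0$, so the bound holds. It then suffices to show that applying a single firing move to a configuration $F$ satisfying $F_\tau\le B(\tau)$ for all $\tau\neq\sigma$ yields a configuration $F'$ still satisfying $F'_\tau\le B(\tau)$. Only moves that raise a value can threaten the bound, so I will examine the three move types in turn, always focusing on the face whose value increases.

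The single geometric fact driving everything is that for adjacent faces $a$ and $b$ in the dual grid the Manhattan distances to $\sigma$ differ by exactly one, and that $B$ drops by at most one per unit of distance, i.e.\ $B(\tau')\ge B(\tau)-1$ whenever $d(\tau')=d(\tau)+1$. The latter is a short check separating the linear regime $k-d+1>0$ from the flat regime where $B=0$, the only delicate point being the transition distance $d=k$, where $B$ passes from $1$ to $0$. Together these give $B(b)\ge B(a)-1$ for any two neighbors $a$ and $b$.

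For a move of a circulation chip from $a$ to $b$ (both $\neq\sigma$), the value that increases is $F_b\to F_b+1$, and the firing condition forces $F_a\ge F_b+2$. Using the induction hypothesis $F_a\le B(a)$ I obtain $F'_b=F_b+1\le F_a-1\le B(a)-1\le B(b)$, where the last step is exactly the geometric fact above; notably this needs no sign hypothesis on the values. For a creation move at a face $a$ (here $b=\sigma$, so $a$ is adjacent to $\sigma$ and $d(a)=1$, giving $B(a)=k$), the condition $F_\sigma>F_a$ together with the invariant $F_\sigma=k$ from Proposition~\ref{prop:maxmin}(\ref{c}) yields $F_a\le k-1$, hence $F'_a=F_a+1\le k=B(a)$. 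A deletion move only decreases a value and so cannot violate an upper bound. This closes the inductive step.

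I expect the main obstacle to be choosing the correct invariant and verifying the distance inequality $B(b)\ge B(a)-1$ cleanly across the boundary of the support --- in particular checking the transition at $d=k$ and confirming (by a parity argument on the grid) that adjacent dual faces never share the same distance to $\sigma$, so that a move to a farther face genuinely costs exactly one unit of the bound. Once that inequality is secured, the three case checks are immediate, and the creation case rests only on the fact that the value at the hole $\sigma$ never changes.
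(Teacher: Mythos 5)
Your proof is correct, but it is organized differently from the paper's. The paper inducts on $\dist(\sigma,\tau)$: assuming the bound holds for all time at all faces of distance at most $d-1$, it considers the \emph{first} moment a face at distance at least $d$ exceeds the bound, and derives a contradiction from the value the donating neighbor (necessarily at distance $d-1$) would have needed --- a nested argument with distance on the outside and a first-violation-in-time argument inside. You instead run a single induction on the number of steps, showing the bound $B(\tau)=\max\{0,k-\dist(\sigma,\tau)+1\}$ is an invariant preserved by each of the three move types, the only inputs being the Lipschitz property $B(b)\ge B(a)-1$ for neighboring faces, the firing condition $F_a\ge F_b+2$ (which you correctly use; note the paper's face-representation box for $(G,\sigma)$ misprints it as $F_a\ge F_b-2$), and the conservation $F_\sigma=k$ from Proposition~\ref{prop:maxmin}. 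Your chain $F'_b\le F_a-1\le B(a)-1\le B(b)$ is essentially the paper's contradiction read forwards instead of backwards, so the underlying arithmetic is the same, but the single-pass invariant formulation is cleaner and, as you note, dispenses with any nonnegativity hypothesis. One remark: the parity concern you flag at the end is unnecessary. Your argument never needs adjacent dual faces to have distances differing by \emph{exactly} one; the one-sided triangle inequality $\dist(\sigma,b)\le\dist(\sigma,a)+1$ suffices for $B(b)\ge B(a)-1$. Dropping the parity claim is not just a simplification --- it makes your proof apply verbatim to arbitrary planar graphs, where adjacent faces \emph{can} be equidistant from $\sigma$ (as the paper observes in Section~\ref{sec:extensions}), whereas exact alternation is a special feature of the grid.
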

\begin{proof}
  We proceed by induction on $\dist(\sigma,\tau)$.

  Base case: When $\dist(\sigma,\tau)=1$ the result follows from the fact
  that the maximum value in $K$ is $k$ and the maximum value cannot
  increase.

  Induction step: Suppose the claim holds for all faces with distance
  at most $d-1$ from $\sigma$.  Let $A = \{ a \;|\; \dist(\sigma,a)
  \ge d \}$ be the set of faces with distance at least $d$ from
  $\sigma$.  Initially, $K_a = 0$ for all $a \in A$.  Suppose $K^*_a
  \not\le \max\{0,k-d+1\}$ for some $a \in A$.  Consider, in
  particular, the first time that $K^*_a > \max\{0,k-d+1\}$ for some
  $a \in A$.  The face $a$ must have just received a circulation chip
  from a neighboring face $b$ with $K^*_b > \max\{0,k-d+1\}+1$ before
  the last step.  Since this is the first time $K^*_a >
  \max\{0,k-d+1\}$ for $a \in A$, the face $b$ cannot be in $A$.
  Since $b$ is a neighbor of a face in $A$ and not in $A$, it must be
  that $\dist(\sigma,b)=d-1$.  But, by induction, the value at $b$
  must be at most $\max\{0,k-d+2\} \le \max\{0,k-d+1\}+1$ which is a
  contradiction.
\end{proof}

The main result of this section, Theorem~\ref{thm:main}, shows that
starting from the initial configuration $K$, the flow-firing process
always terminates at the configuration achieving equality for all
bounds in Lemma~\ref{lem:bound}.  First we need the following
observations.

\begin{proposition}
  \label{prop:1415}
Let $K^*$ denote any configuration reachable from $K$.  Then
  \begin{enumerate}
\item\label{14} $K^*_{\tau} \ge 0$ for all $\tau$.
\item\label{15} The total number of circulations chips, $\sum
  K^*_{\tau}$, is bounded and non-decreasing over time.
  \end{enumerate}
\end{proposition}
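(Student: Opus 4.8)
The plan is to read both parts off the two facts already in hand: Proposition~\ref{prop:maxmin}, which pins down the extremal values and the constancy of $F_\sigma$, and Lemma~\ref{lem:bound}, which gives a pointwise upper bound on every reachable configuration. Neither part requires any new invariant; the work is in combining these correctly.

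For part (\ref{14}), I would observe that nonnegativity is just the statement that the global minimum never drops below $0$. The initial configuration $K$ has $K_\sigma = k \ge 0$ and $K_\tau = 0$ for all other faces, so its minimum value over all faces is $0$. By Proposition~\ref{prop:maxmin} part (\ref{b}) the minimum value does not decrease under the process, so it stays $\ge 0$ for every reachable $K^*$. Since every face value is at least the minimum, this is exactly $K^*_\tau \ge 0$ for all $\tau$.

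For part (\ref{15}) I would treat boundedness and monotonicity separately. Boundedness is immediate from Lemma~\ref{lem:bound}: combined with part (\ref{14}) it gives $0 \le K^*_\tau \le \max\{0,\,k-\dist(\sigma,\tau)+1\}$, so $K^*_\tau = 0$ whenever $\dist(\sigma,\tau) \ge k+1$. Hence the support of any reachable configuration lies in the finite set of faces at dual-distance at most $k$ from $\sigma$, and summing the distance bound over that finite set produces one constant that bounds $\sum_\tau K^*_\tau$ uniformly over the whole process.

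Monotonicity is the crux, and the main obstacle is to eliminate the single move type that could lower the total. Of the three moves, an interior move from $a$ to $b$ leaves $\sum_\tau F_\tau$ fixed and a creation at $a$ raises it by $1$; only a deletion (the case $b=\sigma$ with $F_\sigma < F_a$) lowers it. I would show deletions can never fire. A deletion forces $a$ to be a neighbor of $\sigma$, hence $\dist(\sigma,a)=1$, and Lemma~\ref{lem:bound} then yields $K^*_a \le \max\{0,k\} = k$; meanwhile Proposition~\ref{prop:maxmin} part (\ref{c}) keeps $F_\sigma = k$ for all time. Thus $F_\sigma = k \ge K^*_a$, so the triggering inequality $F_\sigma < F_a$ is never satisfied and no deletion is ever admissible from a configuration reachable from $K$. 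With deletions excluded, every admissible move is total-preserving or total-increasing, which gives the claimed non-decrease. The delicate point is that the distance-$1$ case of Lemma~\ref{lem:bound} is exactly what is needed to match the frozen value $k$ at $\sigma$: a weaker bound would leave a gap in which a deletion could occur, so the argument genuinely depends on equality in the bound at the faces adjacent to the hole.
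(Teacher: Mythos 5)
Your proof is correct and follows essentially the same route as the paper's: nonnegativity via the non-decreasing minimum from Proposition~\ref{prop:maxmin}, boundedness via the pointwise bound of Lemma~\ref{lem:bound}, and monotonicity by ruling out deletions because neighbors of $\sigma$ never exceed $k$ while $K^*_\sigma$ stays fixed at $k$. Your writeup is simply a more explicit version of the paper's argument, including the (correct) observation that the distance-$1$ case of Lemma~\ref{lem:bound} is exactly tight enough to block deletions.
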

  \begin{proof}
    (\ref{14}) This follows from Proposition~\ref{prop:maxmin}
    part~(\ref{b}) since all values are non-negative in the initial
    configuration.

    (\ref{15}) For any reachable configuration, $K^*_{\sigma} = k$
    and $K^*_{\tau} \le \max\{0,k - \dist(\sigma,\tau)+1\}$ for $\tau \neq
    \sigma$, thus the total sum is bounded.  The sum is non-decreasing
    because no circulation chips are ever deleted.  Neighbors of
    $\sigma$ always have value at most $k$ by Lemma~\ref{lem:bound},
    and $\sigma$ always has value $k$.  Therefore neighbors of
    $\sigma$ never have value larger than $\sigma$.
\end{proof}

\begin{theorem}
  \label{thm:main}
  The flow-firing process on $(G,\sigma)$ with initial configuration
  $K$ terminates at a unique configuration $K^{\bullet}$ after a finite
  number of steps.  The final configuration has face representation
  $$K^{\bullet}_\sigma = k \textrm{ and } K^{\bullet}_{\tau} = \max\{0, k -
  \dist(\sigma,\tau)+1\} \textrm{ for all } \tau \neq \sigma.$$ 
\end{theorem}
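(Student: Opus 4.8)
The plan is to split the statement into two parts: termination (the process reaches \emph{some} terminal configuration after finitely many steps) and characterization (\emph{every} terminal configuration equals $K^{\bullet}$). Combining these gives that the process terminates at the unique configuration $K^{\bullet}$.

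For termination, I would first record that throughout the process every value lies in $[0,\max\{0,k-\dist(\sigma,\tau)+1\}]$, by Proposition~\ref{prop:1415}(\ref{14}) and Lemma~\ref{lem:bound}; in particular the support stays inside the finite dual ball of radius $k$ about $\sigma$, so the potential $\psi(F)=\sum_{\tau\neq\sigma}F_\tau^2$ is a non-negative integer bounded above by $M:=\sum_{\tau\neq\sigma}\max\{0,k-\dist(\sigma,\tau)+1\}^2$. The obstruction to a direct potential argument as in Theorem~\ref{thm:conservative} is the creation move, which \emph{raises} $\psi$. I would control this using Proposition~\ref{prop:1415}(\ref{15}): the total number of circulation chips is bounded and non-decreasing, and each creation move increases it by exactly one, so only finitely many creation moves can occur. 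Between consecutive creation moves only interior moves take place, and each of these strictly decreases $\psi$ by at least $2$ (the computation from Theorem~\ref{thm:conservative}, valid since neither face involved is $\sigma$). Since $\psi$ is bounded below by $0$ and above by $M$, each maximal run of interior moves is finite, and finitely many such runs yield finitely many steps overall.

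For the characterization, I would read off what a terminal configuration $K^{\bullet}$ must satisfy. No interior move being available forces $|K^{\bullet}_a-K^{\bullet}_b|\le 1$ for every pair of neighboring faces both different from $\sigma$; no creation move being available forces $K^{\bullet}_a\ge F_\sigma=k$ for every neighbor $a$ of $\sigma$, which combined with the bound $K^{\bullet}_a\le k$ from Lemma~\ref{lem:bound} pins every distance-$1$ face to the value $k$. The upper bound $K^{\bullet}_\tau\le\max\{0,k-\dist(\sigma,\tau)+1\}$ is already supplied by Lemma~\ref{lem:bound}, so it remains to establish the matching lower bound. For this I would take a geodesic $\sigma=\tau_0,\tau_1,\dots,\tau_d=\tau$ in the dual graph, note that $\tau_1,\dots,\tau_d$ are all interior faces with consecutive members adjacent, and propagate the boundary value: starting from $K^{\bullet}_{\tau_1}=k$ and losing at most $1$ per step gives $K^{\bullet}_\tau\ge k-(d-1)=k-\dist(\sigma,\tau)+1$. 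Together with $K^{\bullet}_\tau\ge 0$ this yields $K^{\bullet}_\tau=\max\{0,k-\dist(\sigma,\tau)+1\}$ for all $\tau\neq\sigma$, while $K^{\bullet}_\sigma=k$ by Proposition~\ref{prop:maxmin}(\ref{c}).

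The main obstacle I anticipate is the termination argument rather than the characterization: because creation moves break monotonicity of the potential, termination cannot be read off from $\psi$ alone, and the two monotone quantities must be combined, using the chip count to bound the number of creation moves and the potential to bound the interior moves in between. The characterization is the conceptual core of the ``confluence without a diamond lemma'' phenomenon: uniqueness is obtained not from local confluence but by sandwiching every reachable terminal state between the global upper bound of Lemma~\ref{lem:bound} and the geodesic lower bound forced by the termination conditions.
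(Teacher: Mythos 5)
Your proposal is correct, and it reaches the theorem by a genuinely different termination argument than the paper's. The paper avoids your two-quantity bookkeeping entirely by choosing the \emph{shifted} potential $\psi(K^*)=\sum_{\tau}(k-K^*_{\tau})^2$ (summed over faces within distance $k+1$ of $\sigma$): since a creation move occurs at a neighbor $a$ of $\sigma$ only when $K^*_a<k$, a creation moves that value toward $k$ and so strictly decreases $(k-K^*_a)^2$, while for interior moves the same computation as in Theorem~\ref{thm:conservative} gives a decrease of at least $2$; hence a single monotone integer quantity decreases under \emph{every} legal move, termination is immediate, and one even gets an explicit bound on the total number of steps. Your version --- the plain potential $\sum F_\tau^2$, with the chip count of Proposition~\ref{prop:1415}(\ref{15}) as a second monotone quantity capping the number of creation moves, and finiteness of each maximal run of interior moves in between --- is also valid (deletions never occur, since by Lemma~\ref{lem:bound} neighbors of $\sigma$ never exceed $k$, so the chip count is genuinely non-decreasing), and it is arguably more robust: it works whenever the potential-increasing moves can be counted separately, without having to find a potential adapted to the boundary value $k$. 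For the characterization of the terminal state, your argument is essentially the paper's in different clothing: the paper inducts on $\dist(\sigma,\tau)$ via a neighbor $\gamma$ at distance $d-1$, using the terminal condition $K^{\bullet}_\tau\in\{K^{\bullet}_\gamma-1,K^{\bullet}_\gamma,K^{\bullet}_\gamma+1\}$ squeezed against Lemma~\ref{lem:bound} and nonnegativity, whereas you propagate the lower bound along a dual geodesic from the pinned value $k$ at distance $1$; these are the same squeeze, and your formulation has the small advantage of avoiding the paper's case analysis on the possible values of $K^{\bullet}_\gamma$. One step worth making explicit in your write-up: a dual geodesic starting at $\sigma$ visits $\sigma$ only at its initial vertex, so every adjacency constraint you invoke along the path really is between two faces different from $\sigma$, where the $|K^{\bullet}_a-K^{\bullet}_b|\le 1$ terminal condition applies.
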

  
\begin{proof}
  First, we prove that the process stops.  Let $K^*$ be a
  configuration reachable from $K$.  Define the potential function
  $$\psi(K^*) = \sum_{\tau} (k-K^*_{\tau})^2,$$ where the sum is
  over all faces with distance at most $k+1$ from $\sigma$.  Note that
  this function is bounded from below, i.e. $\psi(K^*) \ge 0$.
  Moreover, $\psi(K^*)$ is finite for the initial configuration
  $K^*=K$.  Each flow-firing step decreases $\psi(K^*)$ by at least one:

  For a step that creates a circulation chip at a face $\tau$ neighboring
  $\sigma$,  $K^*_{\tau}$ is always at most $k$.  Therefore adding a
  circulation at $\tau$ can only decrease $(k-K^*_{\tau})^2$.

  For a step that moves a circulation chip from $\tau$ to $\gamma$: Let $F$
  be the configuration before the step and $G$ be the
  configuration after the step.  Then

  \begin{align*}
    \psi(F)-\psi(G) 
    &= [(k-F_{\tau})^2+(k-F_{\gamma})^2]-[(k-F_{\tau}-1)^2+(k-F_{\gamma}+1)^2] \\
    &= [k^2+ F_{\tau}^2 - 2kF_{\tau}+k^2+F_{\gamma}^2-2kF_{\gamma}] \\
    %%%%%%%%%%%%%%%%%%%%%%%%%%%%%%%%%%%%%%%%%%%%%%%%%
    & \, \hspace{1in} -[k^2+(F_{\tau}-1)^2-2k(F_{\tau}-1)+k^2+(F_{\gamma}+1)^2-2k(F_{\gamma}+1)] \\
    &= [F_{\tau}^2 - 2kF_{\tau} + F_{\gamma}^2 - 2kF_{\gamma}] \\
    & \, \hspace{1in} -[F_{\tau}^2 +1-2F_{\tau}-2kF_{\tau}+2k+ F_{\gamma}^2 + 1 + 2F_{\gamma}-2kF_{\gamma}-2k] \\
    &= 2({F_{\tau}}-F_{\gamma})-2\\
    &\ge 2, 
  \end{align*}
  where the final inequality follows from the fact that $F_\tau-F_\gamma
  \ge 2$ for a circulation chip to move from $\tau$ to $\gamma$.  \\

  Next, $K^\bullet_\sigma = k$ since the value at $\sigma$ never
  changes.  To see that $K^{\bullet}_{\tau} = \max\{0, k -
  \dist(\sigma,\tau)+1\}$ for $\tau \neq \sigma$, we argue by induction on
  $\dist(\sigma,\tau)$.

  Base case: When $\dist(\sigma,\tau)=1$ we have that $\tau$ is a
  neighbor of $\sigma$.  Because of the allowable firing steps the
  process can only terminate if $K^\bullet_\tau = K^\bullet_\sigma =
  k$.
  
  Induction step: Suppose $\dist(\sigma,\tau)=d>1$.  Let $\gamma$ be a
  neighbor of $\tau$ with $\dist(\sigma,\gamma)=d-1$.  By induction
  $K^\bullet_\gamma = \max\{0,k-d+2\}$.  Because of the allowable
  firing steps the process can only terminate if $K^\bullet_\tau$ 
  is in $\{K^\bullet_\gamma-1,K^\bullet_\gamma,K^\bullet_\gamma+1\}$.
  By Lemma~\ref{lem:bound} it must be that $K^\bullet_\tau \leq
  \max\{0,k-d+1\}$.  Considering the two possible values for
  $K^\bullet_{\gamma}$ and the three possible values for $K^\bullet_{\tau}$ directly
  gives that $K^\bullet_\tau$ must equal $\max\{0,k-d+1\}$.
  
\end{proof}

Figure~\ref{fig:pyramid} shows a pulse with $k=4$ units of flow and the
resulting final configuration.  In terms of the face representation,
the final configurations is an ``Aztec pyramid''.  The number of
circulation chips at $\sigma$ and neighbors of $\sigma$ is $k$.  The number 
of the circulation chips decreases linearly with the $\ell_1$ distance
from $\sigma$ until reaching zero.  In terms of the edge
representation, the final configuration has exactly one unit of flow
on every edge not in $\sigma$ that is in a face within a $\ell_1$-ball
of radius $k$ centered at $\sigma$.  The remaining edges have no flow.

\section{Extensions}
\label{sec:extensions}

As mentioned in the introduction, we study the grid for simplicity but
the results described here can be extended to more general settings.

\subsection{Planar graphs}

The results for the grid carry over essentially unchanged to any
infinite planar graph.

Proposition~\ref{prop:pigeonhole} follows as:  If there is a vertex
$v$ with $$|\inflow(v) - \outflow(v)| > \deg(v)$$ then the flow-firing
process does not terminate.

Theorem~\ref{thm:conservative} follows unchanged: If the initial
configuration is a finite conservative flow then the flow-firing
process terminates after a finite number of steps.

Theorem~\ref{thm:main} also follows unchanged: If the initial
configuration is a circulation around a topological obstruction
$\sigma$ then the flow-firing process terminates in a finite number of
steps at a \emph{unique} final configuration, see
Figure~\ref{fig:planar}.

The final configuration in Theorem~\ref{thm:main} stated in terms of
the face representation is the same for planar graphs.  But in the edge
representation of the final configuration for a general planar graph
not every edge within some radius
of the distinguished face will terminate with exactly one unit of
flow.  If two neighboring faces have the same distance to $\sigma$
then the edge between them will have zero flow in the final
configuration (see Figure~\ref{fig:planar}).

\begin{figure}
  \centering
  \parbox[c]{2.6in}{\fbox{\includegraphics[clip=true,trim=65 60 50 50, width=2.5in]{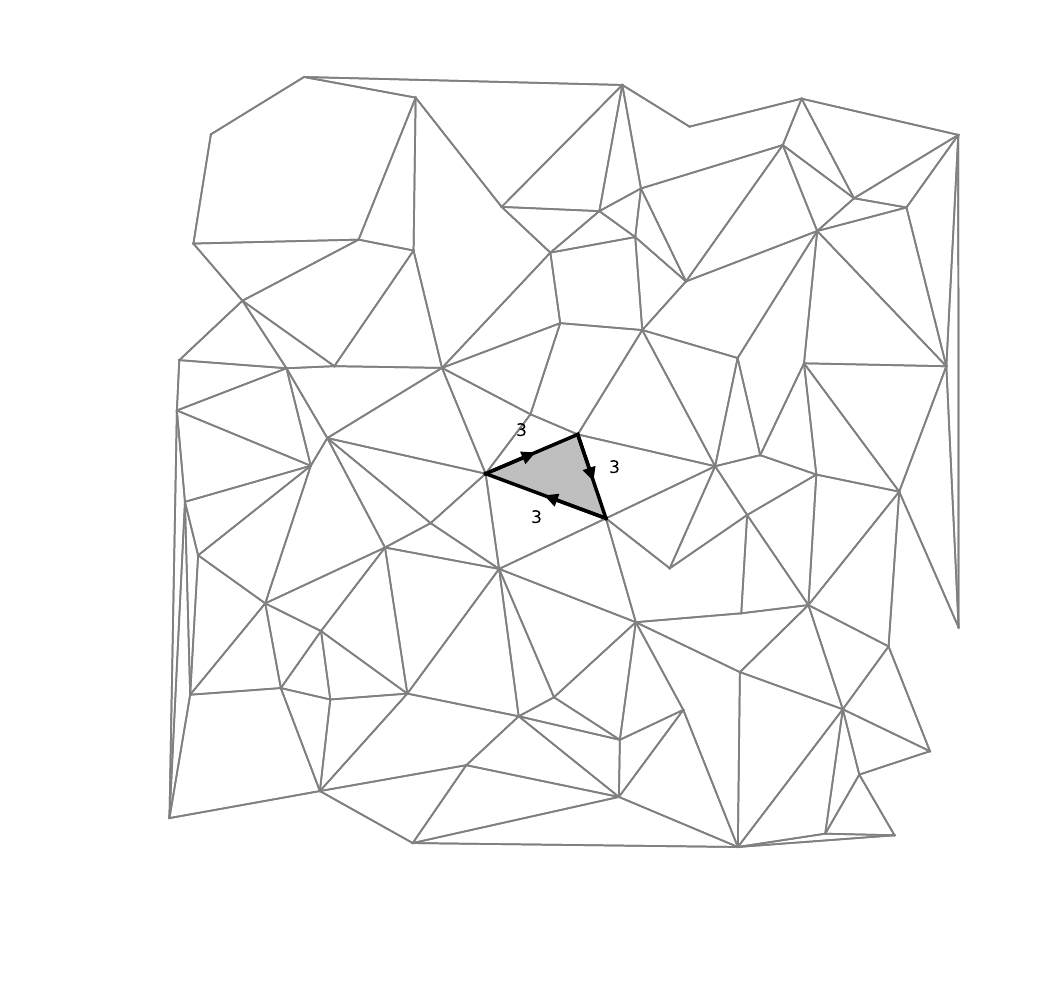}}}
  {\Large $\;\rightarrow\;$}
  \parbox[c]{2.6in}{\fbox{\includegraphics[clip=true,trim=65 60 50 50, width=2.5in]{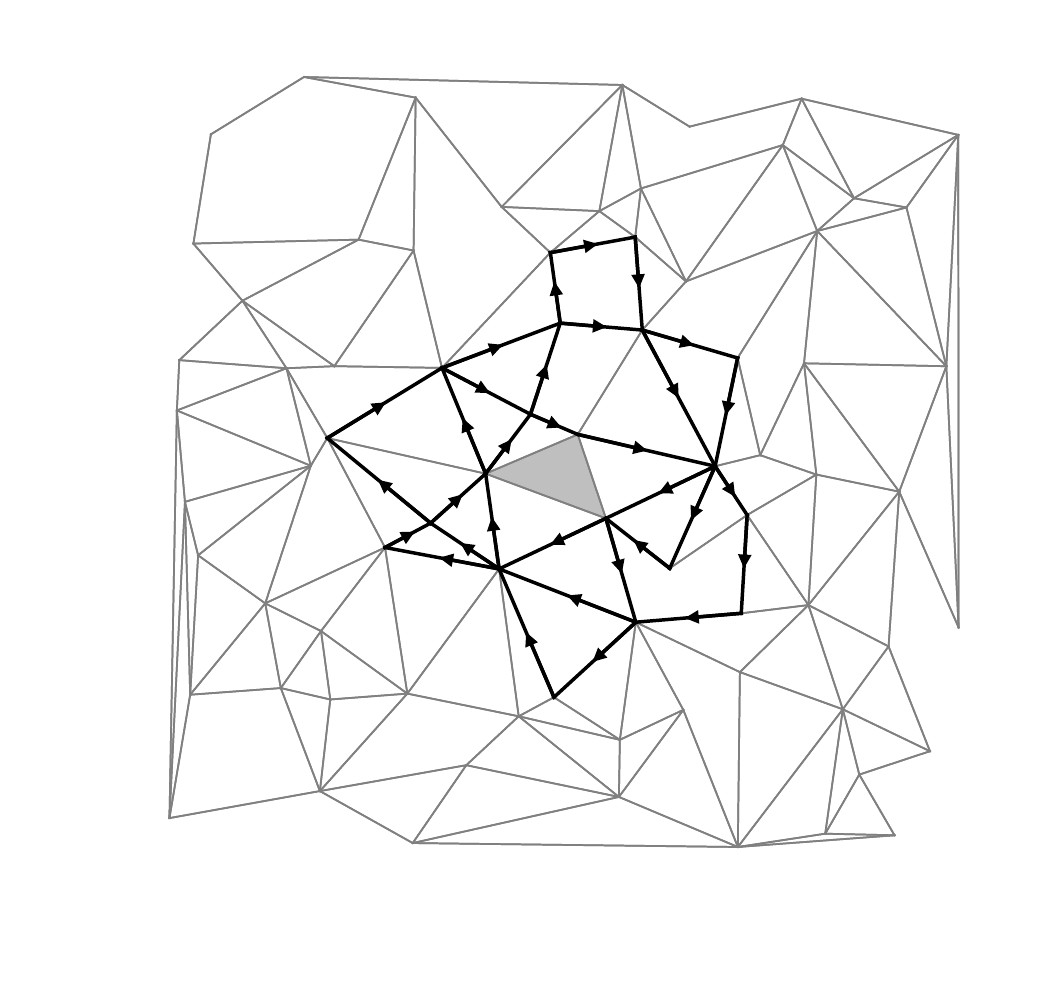}}}
  \vspace{0.5cm}
  
  \parbox[c]{2.6in}{\fbox{\includegraphics[clip=true,trim=65 60 50 50, width=2.5in]{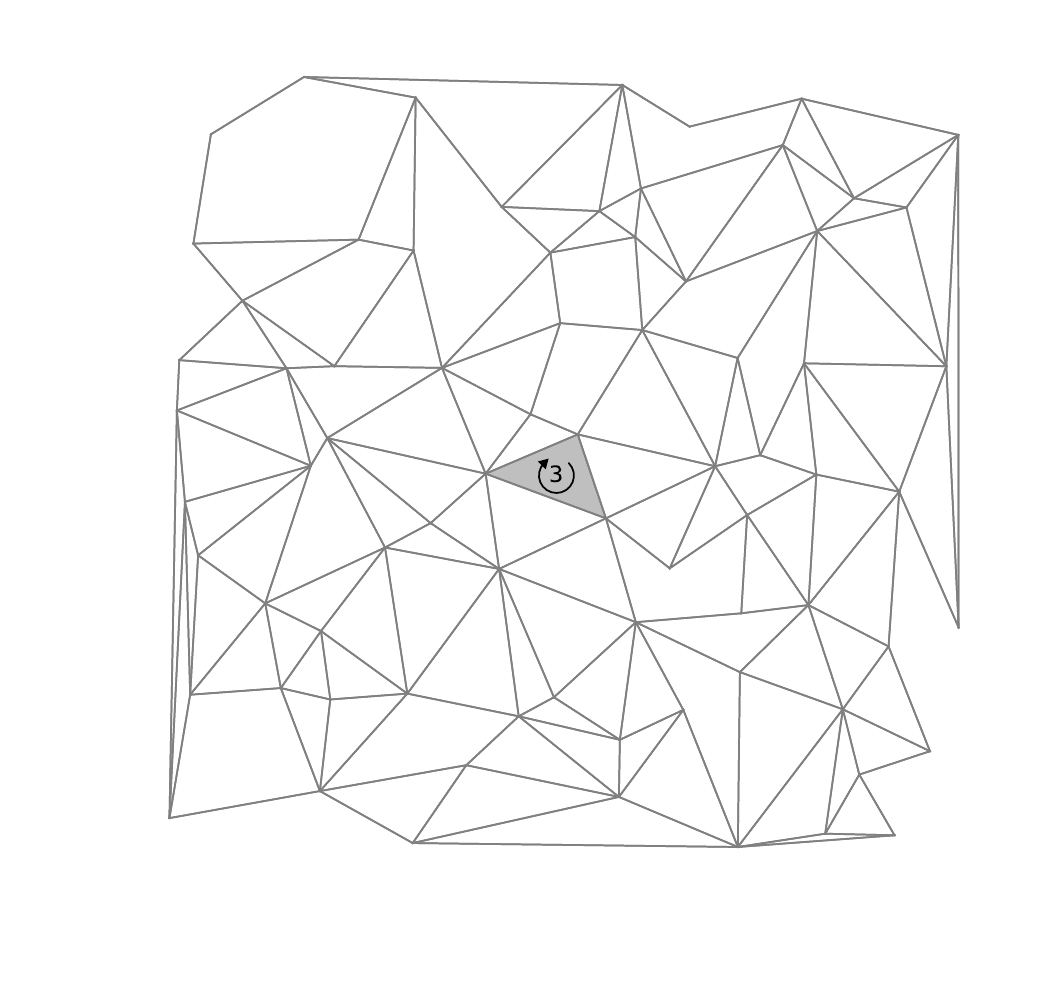}}}
  {\Large $\;\rightarrow\;$}
  \parbox[c]{2.6in}{\fbox{\includegraphics[clip=true,trim=65 60 50 50, width=2.5in]{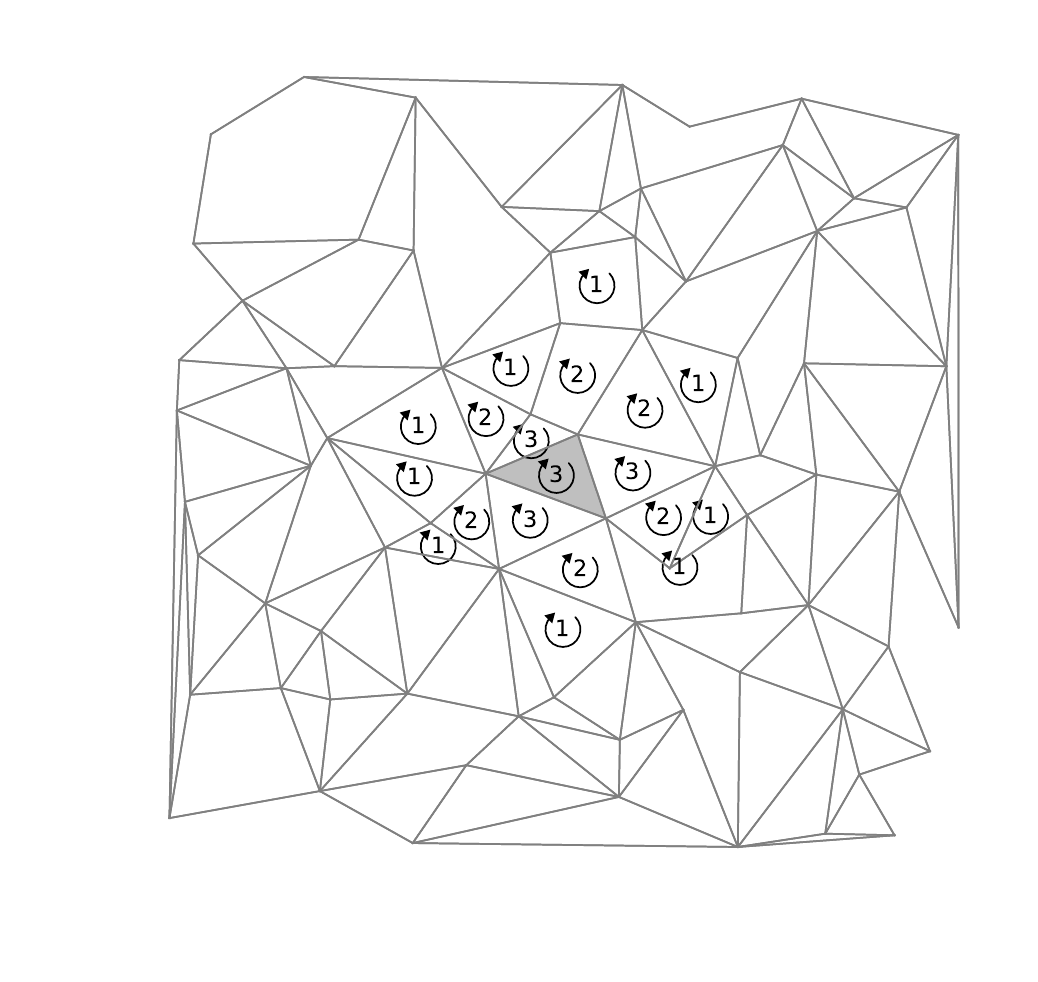}}}  
\caption{The ``pulse'' (of height 3) on a planar graph with a hole.
  The top shows the edge representation and the bottom shows the face representation of the initial and final configurations.}
\label{fig:planar}
\end{figure}

The results described above also hold for \emph{finite}
planar graphs.  In this case the external face is included in the underlying complex.

\subsection{Higher dimensional complexes} 

The flow-firing process on the grid (or a planar graph) is a form of
two-dimensional chip-firing.

More generally, one can work over the $n$-dimensional grid (or a
polytopal decomposition of $n$-dimensional space) and define a
ridge-firing process.  A ridge configuration is an integer assignment
to the $(n-1)$-dimensional faces of an $n$-dimensional complex.  The
ridge-firing process ``reroutes'' the value of a ridge to neighboring
ridges along common facets.

Conservative configurations will, by definition, afford facet
representations.  The conservation requirement is a natural topological
condition.  To be conservative, the ridge configuration must be in the
image of a boundary operator on facets.  In particular, the boundary
of a single facet (edges of a square, squares of a cube, etc.) is a
conservative ridge configuration.  

The same boundary operator is used to define the combinatorial
Laplacian for the complex.  The combinatorial Laplacian in turn
dictates the rerouting rules of higher-dimensional chip-firing.  For
the $n$-dimensional grid (or polytopal decomposition), every ridge is
contained in exactly two facets and the ridge-firing process in terms
of the face representation is precisely the same as in the
$2$-dimensional (flow-firing) case.  If two neighboring facets differ
by $2$ or more units of flow, they can fire to balance out.

Theorem~\ref{thm:conservative} follows unchanged: If the initial state
is a finite conservative ridge configuration then the ridge-firing
process terminates after a finite number of steps.

Theorem~\ref{thm:main} also follows unchanged: If the initial state is
a conservative ridge configuration around a topological obstruction
$\sigma$ then the ridge-firing process terminates in a finite
number of steps at a \emph{unique} final configuration with the
prescribed face representation.

\bibliographystyle{amsalpha}
\bibliography{biblio.bib}

\end{document}